\newcolumntype{^}{>{\currentrowstyle}}
\journal{Arxiv}
\newtheorem{lemma}{Lemma}
\newtheorem{theorem}{Theorem}
\newtheorem{corollary}{Corollary}
\newtheorem{proposition}{Proposition}
\newtheorem{problem}{Problem}
\begin{document}
\renewcommand{\abstractname}{Abstract}
\renewcommand{\refname}{References}
\renewcommand{\tablename}{Table}
\renewcommand{\arraystretch}{0.9}
\thispagestyle{empty}
\sloppy

\begin{frontmatter}
\title{On eigenfunctions of the block graphs of geometric Steiner systems}

\author[01]{Sergey Goryainov}
\ead{sergey.goryainov3@gmail.com}

\author[02]{Dmitry Panasenko}
\ead{makare95@mail.ru}

\address[01] {School of Mathematical Sciences, Hebei International Joint Research Center for Mathematics and Interdisciplinary Science, Hebei Key Laboratory of Computational Mathematics and Applications\\Hebei Normal University, Shijiazhuang  050024, P.R. China}
\address[02] {Chelyabinsk State University, 129 Bratiev Kashirinykh st.,\\Chelyabinsk 454021, Russia}


\begin{abstract}
This paper lies in the context of the studies of eigenfunctions of graphs having minimum cardinality of support. One of the tools is the weight-distribution bound, a lower bound on the cardinality of support of an eigenfunction of a distance-regular graph corresponding to a non-principal eigenvalue.
The tightness of the weight-distribution bound was previously shown in general for the smallest eigenvalue of a Grassmann graph. However, a characterisation of optimal eigenfunctions was not obtained. Motivated by this open problem, we consider the class of strongly regular Grassmann graphs and give the required characterisation in this case. We then show the tightness of the weight-distribution bound for block graphs of affine designs (defined on the lines of an affine space with two lines being adjacent when intersect) and obtain a similar characterisation of optimal eigenfunctions. 
\end{abstract}

\begin{keyword}
eigenfunction; weight-distribution bound; projective space; affine space; equitable 2-partition
\vspace{\baselineskip}
\MSC[2020] 05B05 \sep 05B25 \sep 05C25
\end{keyword}
\end{frontmatter}

\section{Introduction}\label{Intro}
This paper lies in the context of the studies of eigenfunctions of graphs having minimum cardinality of support (see \cite{SV21} for the background). One of the tools is the weight-distribution bound, a lower bound on the cardinality of support of an eigenfunction of a distance-regular graph corresponding to a non-principal eigenvalue.
Note that the tightness of the weight-distribution bound was previously shown in general for the smallest eigenvalue of a Grassmann graph $J_q(n+1,e)$ (see \cite[Section 4]{KMP16}). However, a characterisation of optimal eigenfunctions was not obtained. Motivated by this open problem, we consider the class of strongly regular Grassmann graphs and give the required characterisation in this case.
\begin{theorem}\label{CharacterisationGrassmann}
There is a one-to-one correspondence between optimal $-(q+1)$-eigenfunctions of the Grassmann graph $J_q(n+1,2)$ and reguli in 3-dimensional subspaces of $\operatorname{PG}(n,q)$.    
\end{theorem}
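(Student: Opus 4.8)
The plan is to translate the spectral condition into incidence-geometric terms and then analyse it. Let $N$ be the point--line incidence matrix of $\operatorname{PG}(n,q)$, with rows indexed by the points ($1$-subspaces) and columns indexed by the lines ($2$-subspaces), i.e.\ by the vertices of $J_q(n+1,2)$. Since two distinct lines of $\operatorname{PG}(n,q)$ meet in exactly one point or in none, one has $N^{\top}N=(q+1)I+A$, where $A$ is the adjacency matrix of $J_q(n+1,2)$. Hence, over $\mathbb{R}$, a function $f$ on the vertices is a $-(q+1)$-eigenfunction if and only if $N^{\top}Nf=0$, equivalently $Nf=0$, equivalently $\sum_{L\ni P}f(L)=0$ for every point $P$ of $\operatorname{PG}(n,q)$. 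The weight-distribution bound for the eigenvalue $-(q+1)$ equals $2(q+1)$ and, by \cite{KMP16}, is attained; so an \emph{optimal} eigenfunction is precisely a nonzero weighting $f$ of the lines that vanishes through every point and has exactly $2(q+1)$ nonzero values---equivalently, the columns of $N$ indexed by $\operatorname{supp}f$ form a minimal linearly dependent set.

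Next I would handle the construction. Let $\mathcal{R}$ be a regulus in a $3$-dimensional subspace $U\subseteq\operatorname{PG}(n,q)$, with opposite regulus $\mathcal{R}'$; then $\mathcal{R}$ determines $\mathcal{R}'$ and the unique hyperbolic quadric $Q\subseteq U$ these two families rule, every point of $Q$ lies on exactly one line of $\mathcal{R}$ and one of $\mathcal{R}'$, and every point not on $Q$ (whether in $U$ or not) lies on no line of $\mathcal{R}\cup\mathcal{R}'$. Consequently $f_{\mathcal{R}}:=\mathds{1}_{\mathcal{R}}-\mathds{1}_{\mathcal{R}'}$ vanishes through every point, and, the two reguli being disjoint, has support $\mathcal{R}\sqcup\mathcal{R}'$ of size $2(q+1)$; thus $f_{\mathcal{R}}$ is optimal. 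Since $\mathcal{R}$ is recovered from $f_{\mathcal{R}}$ as the set of lines carrying a positive value, the assignment $\mathcal{R}\mapsto f_{\mathcal{R}}$ is injective (eigenfunctions being taken up to a positive scalar, the sign of the scalar recording which of the two rulings is the positive part), and the substance of the theorem is its surjectivity.

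For surjectivity, suppose $f$ is optimal with $S:=\operatorname{supp}f$, $|S|=2(q+1)$, rescaled so that $\max_L|f(L)|=1=f(L_1)$ for some $L_1\in S$. Summing $\sum_{L\ni P}f(L)=0$ over the $q+1$ points $P$ of $L_1$, and using that a line $\ne L_1$ meets $L_1$ in at most one point, gives $\sum\{f(L):L\in S,\ L\cap L_1\ne\varnothing,\ L\ne L_1\}=-(q+1)$; hence at least $q+1$ lines of $S$ meet $L_1$, and since the vanishing at each point $P\in L_1$ forces some line of $S$ other than $L_1$ through $P$, these lines cover all $q+1$ points of $L_1$. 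Minimality of $|S|$ should then force exactly $q+1$ such lines, one through each point of $L_1$ and each with value $-1$. The crucial step is to bootstrap this into the global assertion that \emph{every covered point lies on exactly two lines of $S$, and $f$ takes only the values $0$ and $\pm1$}. Granting it, $S=S^{+}\sqcup S^{-}$ with $f\equiv1$ on $S^{+}$ and $f\equiv-1$ on $S^{-}$, and since a doubly covered point has $f$-values summing to $0$, $|S^{+}|=|S^{-}|=q+1$ and every covered point lies on exactly one line of $S^{+}$ and one of $S^{-}$.

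From here the argument is classical. Two lines of $S^{+}$ (resp.\ of $S^{-}$) are skew, since a common point would be covered by two lines of the same sign; and a line $L\in S^{+}$ meets all $q+1$ lines of $S^{-}$, because its $q+1$ points lie on $q+1$ pairwise distinct lines of $S^{-}$, which exhaust $S^{-}$. So $S^{+}$ and $S^{-}$ are two families of $q+1$ pairwise skew lines, mutually transversal. Fixing skew lines $A_0,A_1\in S^{+}$ and setting $U=\langle A_0,A_1\rangle$, every line of $S^{-}$ meets $A_0$ and $A_1$ in two distinct points of $U$, hence lies in $U$; then every line of $S^{+}$ meets two skew lines of $S^{-}$ (which already span $U$) in two distinct points of $U$, hence also lies in $U$. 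Thus $S^{+}\cup S^{-}\subseteq U\cong\operatorname{PG}(3,q)$, and $q+1$ pairwise skew lines of $\operatorname{PG}(3,q)$ possessing $q+1$ common transversals form a regulus whose opposite regulus is the set of those transversals; hence $S^{+}=\mathcal{R}$ is a regulus, $S^{-}=\mathcal{R}'$, and $f=f_{\mathcal{R}}$, which yields surjectivity and the claimed bijection. The one genuinely hard point is the bootstrapping step displayed above; the incidence-matrix reformulation, the construction, and the identification of mutually transversal skew families with pairs of opposite reguli are routine or standard.
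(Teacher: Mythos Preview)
Your approach is different from the paper's, and the difference is instructive. The paper does not attempt your incidence-matrix analysis at all: it invokes a general structural result for strongly regular graphs (stated in the paper as Lemma~\ref{OptimalEigenfunctionsSRG}, drawn from \cite{KMP16}) which says that for the smallest eigenvalue $s$ of an SRG satisfying a Delsarte-clique regularity condition (here guaranteed by arc-transitivity), an $s$-eigenfunction meets the weight-distribution bound if and only if it is, up to scalar, the $(1,-1,0)$-function supported on an induced $K_{-s,-s}$. With that in hand, the paper's entire proof is the geometric identification of induced $K_{q+1,q+1}$'s in $J_q(n+1,2)$ with regulus/opposite-regulus pairs in some $\operatorname{PG}(3,q)\subseteq\operatorname{PG}(n,q)$ --- exactly the argument in your final paragraph. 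So your last paragraph and the paper's proof coincide; the two approaches diverge only in how they arrive at the premise ``$f$ is $\pm1$-valued on an induced $K_{q+1,q+1}$''.

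On that premise, your proposal has a genuine gap, and you flag it yourself: the ``bootstrapping step'' that every covered point lies on exactly two support lines and that $f$ is $\{0,\pm1\}$-valued is asserted (``should then force'', ``Granting it'') but not proved. Your local computation at $L_1$ shows that the $q+1$ points of $L_1$ are covered by at least $q+1$ further support lines, but $|S|=2(q+1)$ allows up to $2q+1$ such lines, so ``minimality of $|S|$'' does not by itself pin down the structure; one still needs an argument that propagates the value $-1$ to those lines and then iterates. This is precisely the content of \cite[Theorem~3]{KMP16}, which the paper quotes rather than reproves. Your incidence-matrix reformulation $Nf=0$ and the minimal-dependent-set interpretation are correct and elegant, and would make the whole argument self-contained if you either supply the missing step or cite \cite{KMP16} for it, at which point your route and the paper's merge.
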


Let $X_q(n,1)$ denote the block graph of an affine design $AS(n,q)$, $n \ge 3$. The following theorem shows that the weight-distribution bound is tight for the negative eigenvalue $-q$ of $X_q(n,1)$ and provides a geometric characterisation of optimal eigenfunctions.
\begin{theorem}\label{CharacterisationAffine}
The weight-distribution bound is tight for the eigenvalue $-q$ of $X_q(n,1)$. Moreover, there are exactly two types of optimal $-q$-eigenfunctions: there is a one-to-one correspondence between optimal eigenfunctions of the first type and pairs of parallel classes of lines from some subplane $\operatorname{AG}(2,q)$; there is a one-to-one correspondence between optimal eigenfunctions of the second type and affine reguli in 3-dimensional subspaces of $\operatorname{AG}(n,q)$.     
\end{theorem}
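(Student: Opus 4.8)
The plan is to proceed in three stages. First I would compute the weight-distribution bound. Writing $r=\tfrac{q^{n}-1}{q-1}$ for the number of lines through a point of $\operatorname{AG}(n,q)$, the graph $\Gamma:=X_q(n,1)$ is strongly regular of degree $k=q(r-1)$, with $\lambda=r-2+(q-1)^{2}$ common neighbours for two meeting lines and $\mu=q^{2}$ for two disjoint ones; its non-principal eigenvalues are $\tfrac{q^{2}(q^{n-2}-1)}{q-1}>0$ and $-q$. For $\theta=-q$ the standard sequence is $u_0=1$, $u_1=-\tfrac1{r-1}$, $u_2=\tfrac{q}{(r-1)(r-q)}$, and the sphere sizes around a vertex are $k_0=1$, $k_1=k$, $k_2=k(k-\lambda-1)/\mu$, so that the weight-distribution bound equals $|k_0u_0|+|k_1u_1|+|k_2u_2|=1+q+(q-1)=2q$. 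I would record that the sign pattern $(\operatorname{sign}u_0,\operatorname{sign}u_1,\operatorname{sign}u_2)=(+,-,+)$, since it governs the rigidity argument below.

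Next I would prove tightness by exhibiting the two families. Given a plane $\pi\cong\operatorname{AG}(2,q)$ in $\operatorname{AG}(n,q)$ and two distinct parallel classes $C_1,C_2$ of its lines, set $f=\mathbf{1}_{C_1}-\mathbf{1}_{C_2}$: a line of $C_1$ meets all $q$ lines of $C_2$ and no line of $C_1$ (symmetrically for $C_2$), while any other line either misses $\pi$, or meets $\pi$ in one point and hence exactly one line of each class, or lies in a third parallel class of $\pi$ and hence all $2q$ of them; in each case $\sum_{m\sim\ell}f(m)=-q\,f(\ell)$, so $f$ is a $-q$-eigenfunction of support $2q$. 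Given a $3$-space $\Sigma\cong\operatorname{AG}(3,q)$, take a hyperbolic quadric of its projective completion whose plane at infinity is tangent to it; deleting the points at infinity leaves opposite affine reguli $R,R'$, each a set of $q$ pairwise disjoint lines with every line of $R$ meeting every line of $R'$, both partitioning a common set of $q^{2}$ points with one line of each regulus through each point. Then $f=\mathbf{1}_{R}-\mathbf{1}_{R'}$ is again a $-q$-eigenfunction of support $2q$, by counting, for each line $\ell$, the points of that set lying on $\ell$. These give the maps from parallel-class pairs and from affine reguli to optimal eigenfunctions; injectivity holds because the union of the support recovers $\pi$ (respectively the quadric and $\Sigma$) and the sign recovers the two parts.

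Then comes the core step: every optimal $f$ is of one of these types. Normalise $\|f\|_\infty=1$ and pick $x$ with $f(x)=\pm1$, say $f(x)=1$. Since $2q$ equals the bound and the weight distribution of $f$ at $x$ is $f(x)\cdot(k_0u_0,k_1u_1,k_2u_2)$, each inequality $|\operatorname{supp}(f)\cap\Gamma_i(x)|\ge|k_iu_i|$ is an equality; the triangle inequalities then force $f$ to be $\{0,\pm1\}$-valued, and the signs $(+,-,+)$ force the $q$ support-vertices in $\Gamma_1(x)$ to equal $-1$ and the $q-1$ support-vertices in $\Gamma_2(x)$ to equal $+1$. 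Applying this at each support vertex gives $\operatorname{supp}(f)=S^+\sqcup S^-$ with $|S^+|=|S^-|=q$, the lines within $S^+$ (and within $S^-$) pairwise disjoint, and every line of $S^+$ meeting every line of $S^-$; moreover the eigenvalue equation at a line $\ell\notin\operatorname{supp}(f)$ yields the closure relation that $\ell$ meets equally many lines of $S^+$ and of $S^-$. Thus $f$ is determined by such a ``grid'' of lines, and the task reduces to classifying grids.

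Finally I would classify the grid geometrically, and this is where I expect the main difficulty. If two lines of $S^+$ are parallel, a short argument using uniqueness of parallels in $\operatorname{AG}(n,q)$ shows $S^-$, hence $S^+$, lies in the plane they span, and $q$ pairwise disjoint lines of $\operatorname{AG}(2,q)$ form a full parallel class, so $S^+,S^-$ are two parallel classes of that plane: the first type. Otherwise $S^+$ (hence $S^-$) is pairwise skew, and since a line meeting two skew lines lies in their span, all $2q$ lines lie in a $3$-space $\Sigma\cong\operatorname{AG}(3,q)$; passing to the projective completion, classical facts on reguli — three pairwise skew lines, or a skew quadrilateral when $q=2$, extend uniquely to a hyperbolic quadric $Q$, and a line meeting three lines of a regulus lies in the opposite regulus — place $\overline{S^+}$ and $\overline{S^-}$ into $q$ of the $q+1$ lines of the two opposite reguli $R,R'$ of $Q$; then the closure relation applied to the missing line $m\in R$ (which misses all of $S^+$, being disjoint from $R$, yet meets each of the $q\ge2$ lines of $\overline{S^-}$) forces $m$, and likewise $m'\in R'$, into the plane at infinity, which is therefore tangent to $Q$: the second type, disjoint from the first. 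The delicate points here — choosing the correct uniqueness statement for the quadric uniformly in $q$ (including $q=2$) and using the closure relation to locate its tangent plane — are the affine analogues of the argument underlying Theorem~\ref{CharacterisationGrassmann}, and constitute the main obstacle.
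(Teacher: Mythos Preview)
Your proposal is correct and complete, but it diverges from the paper's route in two places worth noting.

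First, where you re-derive the $\{0,\pm1\}$ grid structure from equality in the weight-distribution bound (computing the standard sequence and invoking rigidity), the paper instead quotes a general result of Krotov--Mogilnykh--Potapov packaged as Lemma~\ref{OptimalEigenfunctionsSRG}(1),(3): since $X_q(n,1)$ is arc-transitive with Delsarte cliques (point-pencils), optimal $(-q)$-eigenfunctions are \emph{exactly} the $\pm1$-indicators of induced $K_{q,q}$'s. So both proofs reduce to classifying the induced complete bipartite subgraphs; you do it by hand, the paper by citation.

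Second, and more interestingly, your treatment of the skew case is genuinely different. You pass to $\operatorname{PG}(3,q)$, fit $\overline{S^+}\cup\overline{S^-}$ into the two reguli of a hyperbolic quadric, and then use the eigenfunction closure relation (equal numbers of neighbours in $S^+$ and $S^-$ for every line outside the support) to force the one missing line of each regulus to lie in the plane at infinity, hence that plane is tangent. The paper instead formulates an intrinsic definition of \emph{affine regulus}, checks that a skew grid satisfies it, and proves separately (Proposition~\ref{CharacterisationOfAffineReguli}) that every affine regulus arises from a projective regulus by deleting a plane through one line of each ruling --- the key step there being a pigeonhole contradiction (if no transversal of the projective regulus lies at infinity, the affine regulus would have $q+1$ transversals, forcing two through a single point). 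Your argument exploits the eigenfunction hypothesis in the geometry; the paper's argument is purely incidence-geometric and yields an independent characterisation of affine reguli as a byproduct. Both are clean; yours is shorter for the theorem at hand, theirs is reusable.

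One small point to tighten: when you apply the closure relation to the missing line $m\in R$, you should say explicitly that you are applying it to the \emph{affine part} $m'$ of $m$ (assuming $m$ is not at infinity), and that $m'$ meets at least $q-1$ lines of $S^-$ (since $m$ meets each $\overline{\ell}$, $\ell\in S^-$, and at most one such intersection can be at the single infinite point of $m$, by pairwise skewness of $S^-$). This is what actually contradicts the closure relation for $q\ge 2$.
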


Similar to \cite[Theorem 2.4.4]{BR98}, the following proposition gives an explicit general algebraic description of an affine regulus in $\operatorname{AG}(n,q)$.

\begin{proposition}\label{ConstructionOfAffineRegulus}
Let $v_1,v_2,v_3$ be three linearly independent vectors from an $n$-dimensional vector space over a finite field $\mathbb{F}_q$, $n \ge 3$. Let $$S_1 = \{\langle cv_3 + v_1 \rangle + cv_2 \; | \; c \in \mathbb{F}_q\} \text{ and } S_2 = \{\langle cv_3 + v_2 \rangle + cv_1 \; | \; c \in \mathbb{F}_q\}.$$ 
Then the following statements hold.\\
{\rm(1)} The sets $S_1$ and $S_2$ are affine reguli opposite to each other in the subspace $\operatorname{AG}(3,q)$ of $\operatorname{AG}(n,q)$ generated by the vectors $v_1,v_2,v_3$.\\
{\rm(2)} Each of the reguli $S_1$ and $S_2$ can be embedded into a class of parallel planes in the subspace $\operatorname{AG}(3,q)$ generated by $v_1,v_2,v_3$.
\end{proposition}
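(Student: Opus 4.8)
The plan is to introduce affine coordinates $(x_1,x_2,x_3)$ on the subspace $\operatorname{AG}(3,q) = \langle v_1,v_2,v_3\rangle$, writing a point $x_1v_1+x_2v_2+x_3v_3$ as $(x_1,x_2,x_3)$, and then to recognise $S_1$ and $S_2$ as the two rulings of an affine hyperbolic quadric. First I would check that the line of $S_1$ with parameter $c$ equals $\{(t,c,ct) : t \in \mathbb{F}_q\}$ and the line of $S_2$ with parameter $c$ equals $\{(c,t,ct) : t \in \mathbb{F}_q\}$; hence both families lie on the affine quadric $Q \colon x_3 = x_1x_2$, with $S_1$ obtained by slicing $Q$ with the planes $x_2 = \mathrm{const}$ and $S_2$ by the planes $x_1 = \mathrm{const}$, and each family partitions the point set of $Q$. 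A short direct computation then records the incidences: two distinct lines of $S_1$ (resp.\ $S_2$) are disjoint and non-parallel, while the line of $S_1$ with parameter $c'$ and the line of $S_2$ with parameter $c$ meet in the single point $(c,c',cc')$.

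For statement~(1) I would pass to the projective closure inside the $3$-subspace of $\operatorname{PG}(n,q)$ spanned by $v_1,v_2,v_3$, with hyperplane at infinity $H_\infty = \{x_0 = 0\}$ and affine chart $x_0 = 1$. The closure of $Q$ is the quadric $\bar Q \colon x_0x_3 = x_1x_2$, which is nondegenerate and carries lines, hence is the hyperbolic quadric $Q^+(3,q)$. Its section by $H_\infty$ is the pair of lines $\ell \colon x_0 = x_1 = 0$ and $\ell' \colon x_0 = x_2 = 0$, meeting at $(0:0:0:1)$; thus $H_\infty$ is tangent to $\bar Q$, and $\ell,\ell'$ lie in the two opposite reguli of $\bar Q$. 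Computing the points at infinity of the lines of $S_1$ and $S_2$ shows that the projective closures of the lines of $S_1$ together with $\ell$ form one regulus $\mathcal R_1$, and the closures of the lines of $S_2$ together with $\ell'$ form the opposite regulus $\mathcal R_2$. Deleting $H_\infty$ removes $\ell$ entirely from $\mathcal R_1$ and $\ell'$ entirely from $\mathcal R_2$, while each remaining line loses exactly its unique point at infinity; so $S_1$ and $S_2$ are precisely the affine parts of $\mathcal R_1$ and $\mathcal R_2$, i.e.\ a pair of opposite affine reguli of $\operatorname{AG}(3,q)$.

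For statement~(2) I would take the planes $\pi_c \colon x_2 = c$ (equivalently $\langle v_1,v_3\rangle + cv_2$), $c \in \mathbb{F}_q$: the line of $S_1$ with parameter $c$ lies in $\pi_c$, distinct $c$ give distinct planes, and all the $\pi_c$ share the line $\ell$ at infinity, so $\{\pi_c : c\in\mathbb{F}_q\}$ is a parallel class of $q$ planes with each plane containing exactly one line of $S_1$; this exhibits the required embedding, and the symmetric argument with the planes $x_1 = c$ handles $S_2$.

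The computations are routine coordinate bookkeeping; the one place that needs care is the projective completion in statement~(1) — verifying that $\bar Q$ is a hyperbolic quadric, that $H_\infty$ is a tangent plane (so $H_\infty \cap \bar Q$ is the degenerate conic $\ell \cup \ell'$), and matching $\ell,\ell'$ correctly with the reguli $\mathcal R_1,\mathcal R_2$. This is what guarantees that passing to the affine part removes a single line from each projective regulus, so that $S_1$ and $S_2$ are genuine affine reguli rather than some smaller or degenerate configuration. If the working definition of affine regulus is instead combinatorial (a set of $q$ pairwise disjoint lines of $\operatorname{AG}(3,q)$, spanning the space, such that every transversal of three of them is a transversal of all), the same projective picture still delivers the proof: the transversals of the lines of $S_1$ are exactly the affine parts of the lines of $\mathcal R_2$, and vice versa.
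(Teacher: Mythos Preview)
Your argument is correct. The coordinate computation is right (the $S_1$-line with parameter $c$ is $\{(t,c,ct)\}$, the $S_2$-line with parameter $c$ is $\{(c,t,ct)\}$, and they meet at $(c,c',cc')$), and the projective closure $x_0x_3=x_1x_2$ is indeed $Q^+(3,q)$ with $H_\infty$ tangent, so removing $H_\infty$ deletes exactly one line from each ruling. One small slip: the planes $\pi_c:x_2=c$ share the line $\ell'$ (defined by $x_0=x_2=0$) at infinity, not $\ell$; this does not affect the argument.

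Your route differs from the paper's. The paper gives only a short direct verification: check by hand that any two lines of the same $S_i$ are skew and any two lines from different $S_i$ intersect, which already forces the transversal conditions in the definition of an affine regulus (each line of $S_2$ meets all $q$ lines of $S_1$, hence is a transversal; the $q$ intersection points are distinct, so every point of every $S_1$-line carries a transversal; and any transversal of $S_1$ has its $q$ points on the $q$ lines of $S_1$). For~(2) the paper simply names the parallel classes of planes as the cosets of $\langle v_1,v_3\rangle$ and $\langle v_2,v_3\rangle$, which is exactly your $x_2=c$ and $x_1=c$. Your approach instead realises $S_1\cup S_2$ as the affine part of the two rulings of a hyperbolic quadric and then appeals (implicitly) to Proposition~\ref{CharacterisationOfAffineReguli}(1). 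This is more conceptual and makes the link to projective reguli explicit, at the cost of a longer computation; the paper's bare-hands check is quicker but less illuminating.
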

Note that there are exactly $q^4(q^3-1)(q+1)$ different affine reguli in $\operatorname{AG}(3,q)$.

Let $R = \{\ell_1,\ell_2,\ldots, \ell_{q+1}\}$ and $R^{opp} = \{m_1,m_2,\ldots, m_{q+1}\}$ be a regulus and its opposite in $\operatorname{PG}(3,q)$ embedded into $\operatorname{PG}(n,q)$. Let $H$ be a hyperplane in $\operatorname{PG}(n,q)$ not containing any line from $R \cup R^{opp}$. Let $R' = \{\ell_1', \ell_2', \ldots, \ell_{q+1}'\}$ and $R^{opp} = \{m_1', m_2', \ldots, m_{q+1}'\}$ be affine lines in $\operatorname{PG}(n,q) \setminus H$ obtained from the lines $\ell_1,\ell_2,\ldots, \ell_{q+1}$ and $m_1,m_2,\ldots, m_{q+1}$, respectively, by removing a point at infinity. Consider the block graph $Y_q$ of the affine design on the points and lines of $\operatorname{PG}(n,q) \setminus H$, isomorphic to $X_q(n,1)$. Define a function $f:V(Y_q) \mapsto \mathbb{R}$ by the following rule: for any $\ell \in V(Y_q)$, put
$$
f(\ell) = 
\left\{
\begin{array}{rl}
        1, & \text{for } \ell \in R';\\
        -1, & \text{for } \ell \in (R^{opp})';\\
        0, & \text{otherwise.}
\end{array}
\right.
$$

\begin{proposition}\label{WDBPlus2}
The function $f$ is a $-q$-eigenfunction of $Y_q$. The support of $f$ induces a complete bipartite subgraph with a removed perfect matching.  
\end{proposition}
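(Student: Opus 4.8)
The plan is to reduce everything to one classical fact: the $(q+1)^2$ points $\ell_i\cap m_j$ form a hyperbolic quadric $Q\cong Q^+(3,q)$ in the $3$-space $\Pi:=\langle R\rangle\subseteq\operatorname{PG}(n,q)$, whose two systems of generators are exactly $R$ and $R^{opp}$. From this I would extract the three properties used throughout: every line of $R\cup R^{opp}$ lies on $Q$; two lines of the same regulus are skew; and every point of $Q$ lies on exactly one line of $R$ and exactly one line of $R^{opp}$.

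First I would pin down the induced subgraph. Since $H$ contains no line of $R$, the space $\Pi$ is not contained in $H$, so $\pi:=H\cap\Pi$ is a plane. Each generator $\ell_i$ is not contained in $\pi$, hence meets $\pi$ in a single point $p_i$, and the $p_i$ are pairwise distinct because distinct generators of $R$ are skew; as every point of $Q\cap\pi$ lies on some $\ell_i$ and therefore equals $\ell_i\cap\pi=p_i$, this gives $Q\cap\pi=\{p_1,\dots,p_{q+1}\}$, and the same with the $m_j$'s. Hence there is a permutation $\sigma$ of $\{1,\dots,q+1\}$ with $p_i=m_{\sigma(i)}\cap\pi$, so $\ell_i\cap m_{\sigma(i)}=p_i\in H$, whereas for $j\neq\sigma(i)$ the point $\ell_i\cap m_j$ avoids $\pi$ (otherwise it would lie on both $m_j\cap\pi$ and $m_{\sigma(i)}\cap\pi$, forcing $m_j$ and $m_{\sigma(i)}$ to meet). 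Translating to the affine lines: $\ell_i'$ and $\ell_j'$ never meet, $m_i'$ and $m_j'$ never meet, and $\ell_i'$ meets $m_j'$ precisely when $j\neq\sigma(i)$ — which is exactly the assertion that $\operatorname{supp}(f)=R'\cup(R^{opp})'$ induces the complete bipartite graph between $R'$ and $(R^{opp})'$ with the perfect matching $\{\ell_i'm_{\sigma(i)}':1\le i\le q+1\}$ deleted.

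Next I would verify the eigenvalue equation $\sum_{m\sim\ell}f(m)=-q\,f(\ell)$ at every vertex $\ell$. For $\ell=\ell_i'$ the neighbours in $\operatorname{supp}(f)$ are, by the previous paragraph, exactly the $q$ lines $m_j'$ with $j\neq\sigma(i)$, so $\sum_{m\sim\ell}f(m)=q\cdot(-1)=-q=-q\,f(\ell)$, and $\ell=m_j'$ is symmetric. For $\ell\notin\operatorname{supp}(f)$, write $\bar\ell$ for the projective line with $\ell=\bar\ell\setminus H$; I must show that $\bar\ell$ meets equally many lines of $R$ and of $R^{opp}$ within the affine part. Since $\ell\notin\operatorname{supp}(f)$, the line $\bar\ell$ is not a generator of $Q$, so $\bar\ell\cap Q$ has at most two points; any meeting point of $\bar\ell$ with a generator lies on $Q$, hence in $\bar\ell\cap Q$, and conversely each point of $\bar\ell\cap Q$ lies on exactly one line of $R$ and exactly one line of $R^{opp}$. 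Therefore both numbers equal $|(\bar\ell\cap Q)\setminus H|$, and $\sum_{m\sim\ell}f(m)=0=-q\,f(\ell)$. This single count also covers the cases $\bar\ell\cap\Pi=\varnothing$, $\bar\ell\cap\Pi$ a point, and $\bar\ell\subseteq\Pi$ uniformly.

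The only delicate point is this last step: off the support the contributions of $R$ and of $R^{opp}$ are forced to coincide because both are governed by the single set $\bar\ell\cap Q$, from which the hyperplane $H$ deletes points on the $R$-side and the $R^{opp}$-side at the same time; on the support itself the balance is deliberately broken by restricting to one regulus, and counting each support line's non-matching partners is what produces the eigenvalue $-q$. Everything else is bookkeeping with the quadric, parallel in spirit to the algebraic description in Proposition~\ref{ConstructionOfAffineRegulus}.
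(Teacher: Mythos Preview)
Your argument is correct. The key observation --- that the points of $R\cup R^{opp}$ constitute a hyperbolic quadric $Q^{+}(3,q)$ in the $3$-space $\Pi$ spanned by $R$, with the two reguli as its rulings --- gives exactly the leverage needed. The conic $Q\cap\pi$ (which is indeed a nondegenerate conic since $\pi=H\cap\Pi$ contains no generator) is hit once by each $\ell_i$ and once by each $m_j$, forcing the bijection $\sigma$ and hence the ``$K_{q+1,q+1}$ minus a perfect matching'' description of the support. Your verification of the eigenvalue equation off the support is the cleanest possible: every affine intersection of $\ell$ with a line of $R'$ or of $(R^{opp})'$ corresponds to a point of $(\bar\ell\cap Q)\setminus H$, and since each such point lies on exactly one line of each ruling, the two counts agree.

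As for comparison with the paper: the paper's proof is a single sentence asserting that the result ``follows from the definition of an eigenfunction'', with no details given. Your write-up is therefore not an alternative route so much as an actual proof where the paper declines to give one. The quadric viewpoint you chose is the natural one and makes the balance condition off the support transparent; a more bare-hands alternative would be to verify case by case (line in $\Pi$ vs.\ meeting $\Pi$ in a point vs.\ skew to $\Pi$) that the number of incident $\ell_i'$ equals the number of incident $m_j'$, but your uniform treatment via $|(\bar\ell\cap Q)\setminus H|$ is both shorter and more illuminating.
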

Note that the cardinality of support of $f$ is $2(q+1)$, which is larger by 2 than the value $2q$ given by the weight-distribution bound. The following problem naturally arises.

\begin{problem}\label{prob:WDBPlus2}
What are the eigenfunctions of $X_q(n,1)$ whose cardinality of support is $2(q+1)$, that is, larger by 2 than the value $2q$ given by the weight-distribution bound?    
\end{problem}

In this paper we are not only interested in optimal eigenfunctions. We also discuss an approach for studying equitable 2-partitions based on $(1,-1,0)$-valued eigenfunctions. 
It is well-known that the investigation of equitable $2$-partitions is equivalent to the investigation of eigenfunctions
with two values (see Lemma \ref{1to1}, for example). 
Since eigenfunctions corresponding to distinct eigenvalues of a graph are orthogonal,
one can use $\theta_1$-eigenfunctions of a regular graph $G$ to restrict possible $\theta_2$-equitable $2$-partitions
of the graph $G$ whenever $\theta_1 \ne \theta_2$ (see \cite{T20} for a general discussion).

Let $f_1$ be a $(1,-1,0)$-valued function that is a linear combination of eigenfunctions of $G$ corresponding to non-principal eigenvalues. Denote by $Supp^+(f_1)$ and $Supp^-(f_1)$
the sets of vertices of $G$ where $f_1$ takes values $1$ and $-1$, respectively. 
Since $f_1$ is orthogonal to the constant $1$-valued function, which is a $k$-eigenfunction of $G$ (where $k$ is the degree of $G$),
we have $|Supp^+(f_1)| = |Supp^-(f_1)|$.

Take a non-principal eigenvalue $\theta$ of $G$ such that each of the eigenfunctions involved in the definition of $f_1$ does not correspond to $\theta$.
Consider an arbitrary $\theta$-equitable $2$-partition $\Pi = (V_1,V_2)$ of $G$ and consider the
$\theta$-eigenfunction $f_2$ of $G$ representing $\Pi$, that is,
the function $f_2$ is $(x_1,x_2)$-valued, where $(x_1,x_2)^T$ is a $\theta$-eigenvector of $P_\Pi$, $x_1 \ne x_2$, and $\Pi_{(x_1,x_2)} = \Pi$.
In order to formulate the following proposition, we introduce the $V_1$-indicator function
as follows:
$$
\overline{v} :=
\left\{
  \begin{array}{ll}
    1, & v \in V_1; \\
    0, & v \in V_2.
  \end{array}
\right.
$$
and, for a set of of vertices $U$ put $\overline{U}:=\sum\limits_{u \in U}\overline{u}$.

\begin{proposition}\label{balanceTheorem}
The following equality holds
\begin{equation}\label{balance}
\overline{Supp^+(f_1)} =  \overline{Supp^-(f_1)}.
\end{equation}
\end{proposition}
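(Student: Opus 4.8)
The plan is to deduce the identity purely from an orthogonality relation between $f_1$ and the eigenfunction $f_2$ representing the partition $\Pi$. First I would record that $f_1$ is, by hypothesis, a linear combination of eigenfunctions of $G$ corresponding to non-principal eigenvalues, none of which is $\theta$, whereas $f_2$ is a $\theta$-eigenfunction. Since eigenfunctions of $G$ attached to distinct eigenvalues are orthogonal, it follows that $\langle f_1,f_2\rangle = 0$.

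Next I would express $f_2$ through the $V_1$-indicator function. As $f_2$ takes the value $x_1$ on $V_1$ and $x_2$ on $V_2$, we have $f_2(v) = x_2 + (x_1-x_2)\,\overline{v}$ for every vertex $v$, that is, $f_2 = x_2\,\mathbf{1} + (x_1-x_2)\,\overline{(\cdot)}$, where $\mathbf{1}$ denotes the all-ones function. Plugging this into the inner product and using bilinearity gives
\[
0 = \langle f_1, f_2\rangle = x_2\,\langle f_1,\mathbf{1}\rangle + (x_1-x_2)\sum_{v \in V(G)} f_1(v)\,\overline{v}.
\]
Here $\langle f_1,\mathbf{1}\rangle = |Supp^+(f_1)| - |Supp^-(f_1)| = 0$ by the balance already noted before the proposition (equivalently, because $f_1$ is orthogonal to the principal eigenfunction). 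For the remaining sum, since $f_1$ is $(1,-1,0)$-valued with $f_1 \equiv 1$ on $Supp^+(f_1)$ and $f_1 \equiv -1$ on $Supp^-(f_1)$, we get $\sum_{v} f_1(v)\,\overline{v} = \overline{Supp^+(f_1)} - \overline{Supp^-(f_1)}$.

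Finally, since $x_1 \neq x_2$, I would divide the resulting equation $0 = (x_1-x_2)\bigl(\overline{Supp^+(f_1)} - \overline{Supp^-(f_1)}\bigr)$ by $x_1-x_2$ to obtain \eqref{balance}. I do not anticipate a serious obstacle; the only step needing a little care is justifying $\langle f_1, f_2\rangle = 0$, which rests on the explicit hypothesis that $\theta$ is not among the eigenvalues of the eigenfunctions composing $f_1$, combined with the standard orthogonality of eigenspaces of the (symmetric) adjacency matrix of $G$.
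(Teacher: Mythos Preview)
Your proof is correct and follows essentially the same route as the paper: both arguments use the orthogonality $\langle f_1,f_2\rangle=0$, the balance $|Supp^+(f_1)|=|Supp^-(f_1)|$, and the fact that $x_1\neq x_2$ to conclude. Your decomposition $f_2 = x_2\,\mathbf{1} + (x_1-x_2)\,\overline{(\cdot)}$ is a slightly slicker packaging of the same computation the paper carries out by splitting the sums over $Supp^+(f_1)$ and $Supp^-(f_1)$ and counting $V_1$-vertices in each.
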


\medskip
\begin{corollary}\label{Balance}
Proposition \ref{balanceTheorem} shows that the sets $Supp^+(f_1)$ and $Supp^-(f_1)$ have the same number of vertices from $V_1$.    
\end{corollary}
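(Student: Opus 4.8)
The plan is to exploit the orthogonality between $f_1$ and $f_2$. As recalled in the setup, $f_2$ is a $\theta$-eigenfunction of $G$, while $f_1$ is a linear combination of eigenfunctions corresponding to eigenvalues different from $\theta$; since eigenspaces of the (symmetric) adjacency matrix of $G$ attached to distinct eigenvalues are mutually orthogonal, linearity gives $\langle f_1, f_2\rangle = 0$, where $\langle\cdot,\cdot\rangle$ denotes the standard inner product on $\mathbb{R}^{V(G)}$.

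Next I would express $f_2$ through the $V_1$-indicator function. Since $f_2$ takes value $x_1$ on $V_1$ and $x_2$ on $V_2$, one has $f_2 = x_2\,\mathbf{1} + (x_1-x_2)\,\overline{(\cdot)}$, where $\mathbf{1}$ is the constant $1$-valued function and $\overline{(\cdot)}$ is the $V_1$-indicator function defined in the text; hence $\langle f_1, f_2\rangle = x_2\,\langle f_1,\mathbf{1}\rangle + (x_1-x_2)\,\langle f_1,\overline{(\cdot)}\rangle$. The first summand vanishes: $G$ is regular, so $\mathbf{1}$ is a $k$-eigenfunction and is therefore orthogonal to every eigenfunction occurring in the definition of $f_1$, i.e. $\langle f_1,\mathbf{1}\rangle = \sum_v f_1(v) = |Supp^+(f_1)| - |Supp^-(f_1)| = 0$, which is precisely the identity already noted before the statement.

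It then remains to compute $\langle f_1,\overline{(\cdot)}\rangle$. By definition $f_1$ equals $1$ on $Supp^+(f_1)$, equals $-1$ on $Supp^-(f_1)$, and equals $0$ elsewhere, so $\langle f_1,\overline{(\cdot)}\rangle = \sum_{v\in Supp^+(f_1)}\overline{v} - \sum_{v\in Supp^-(f_1)}\overline{v} = \overline{Supp^+(f_1)} - \overline{Supp^-(f_1)}$. Combining the three relations above yields $0 = (x_1-x_2)\bigl(\overline{Supp^+(f_1)} - \overline{Supp^-(f_1)}\bigr)$, and since $x_1\neq x_2$ we obtain equation \eqref{balance}.

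I do not expect any real obstacle here. The only points deserving care are the passage from orthogonality of single eigenfunctions to orthogonality of $f_1$ (a mere linear combination) with $f_2$, which is handled by bilinearity of $\langle\cdot,\cdot\rangle$; the use of regularity of $G$ to guarantee that $\mathbf{1}$ is the principal eigenfunction and hence orthogonal to the constituents of $f_1$; and the observation that the condition $\Pi_{(x_1,x_2)}=\Pi$ is exactly what ensures the two-valued function $f_2$ above genuinely represents the partition $\Pi$, so that the substitution $f_2 = x_2\,\mathbf{1} + (x_1-x_2)\,\overline{(\cdot)}$ is legitimate.
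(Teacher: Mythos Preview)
Your argument is correct and follows essentially the same route as the paper: both exploit the orthogonality $\langle f_1,f_2\rangle=0$, the two-valuedness of $f_2$, the identity $|Supp^+(f_1)|=|Supp^-(f_1)|$, and $x_1\neq x_2$ to force $\overline{Supp^+(f_1)}=\overline{Supp^-(f_1)}$; your decomposition $f_2=x_2\mathbf{1}+(x_1-x_2)\overline{(\cdot)}$ is just a tidy repackaging of the paper's direct count $m^+x_1+(|Supp^+|-m^+)x_2$. Strictly speaking you have reproved Proposition~\ref{balanceTheorem}; the Corollary itself is then immediate on unpacking the definition $\overline{U}=\sum_{u\in U}\overline{u}=|U\cap V_1|$, which you may wish to state explicitly as the final line.
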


Let $\theta$ be the positive non-principal eigenvalue of a Grassmann graph $J_q(4,2)$. A part of a $\theta$-equitable 2-partition of $J_q(4,2)$ is called a \emph{Cameron-Liebler line class}.
It is well-known that one of the equivalent definitions of a Cameron-Liebler line class is as follows (see \cite[Theorem 1(v)]{P91}): a class of lines $L$ in $\operatorname{PG}(3,q)$ is Cameron-Liebler if and only if $|L \cap R| = |L \cap R^{opp}|$ for all reguli $R$ in $\operatorname{PG}(3,q)$. We note that this condition naturally follows from Theorem \ref{CharacterisationGrassmann} and Corollary \ref{Balance}. Cameron-Liebler classes have been intensively studied (see, for example, \cite{GM14} and \cite{FMRXZ21}). 
Cameron-Liebler lines classes in affine spaces have been recently studied in \cite{DMSS20} and \cite{DIMS21}.

This paper is organised as follows. In Section \ref{prelim}, we list some preliminary definitions and results. In Section \ref{ConstructionsAndCharacterisation}, we prove Theorem \ref{CharacterisationGrassmann}, Theorem \ref{CharacterisationAffine}, Proposition \ref{ConstructionOfAffineRegulus} and Proposition \ref{WDBPlus2}. In Section \ref{sec:balance}, we prove Proposition \ref{balanceTheorem} and Corollary \ref{Balance}.

\section{Preliminaries}\label{prelim}
In this section we list some preliminary definitions and results.

\subsection{The weight-distribution bound for strongly regular graphs} \label{wdb}
A $k$-regular graph on $v$ vertices is called \emph{strongly regular} with parameters $(v,k,\lambda,\mu)$ if any two adjacent vertices have $\lambda$ common neighbours and any two distinct non-adjacent vertices have $\mu$ common neighbours. 
If $G$ is a strongly regular graph, then its complement is also a strongly regular
graph. A strongly regular graph $G$ is \emph{primitive} if both $G$ and its complement
are connected. If $G$ is not primitive, we call it \emph{imprimitive}. The imprimitive
strongly regular graphs are exactly the disjoint unions of complete graphs and
their complements, namely, the complete multipartite graphs with parts of the same size. We focus on primitive strongly regular graphs.

\begin{lemma}[{\cite[Theorem 5.2.1]{GM15}}]\label{EigenvaluesSRG}
If $G$ is a primitive strongly regular graph with parameters $(v,k,\lambda,\mu)$ and  
$$
\Delta:=\sqrt{(\lambda-\mu)^2+4(k-\mu)},
$$
then $G$ has exactly three eigenvalues 
$$
k,~~~r = \frac{\lambda-\mu+\Delta}{2},~~~s = \frac{\lambda-\mu-\Delta}{2},
$$
with respective multiplicities
$$
m_k = 1,~~~m_{r} = -\frac{(v-1)s+k}{r-s},~~~
m_{s} = \frac{(v-1)r+k}{r-s}.
$$
\end{lemma}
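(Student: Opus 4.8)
The plan is to work with the adjacency matrix $A$ of $G$ and the all-ones matrix $J$. First I would record the standard walk-counting identity for a strongly regular graph,
$$
A^2 = kI + \lambda A + \mu(J - I - A),
$$
which rearranges to $A^2 - (\lambda-\mu)A - (k-\mu)I = \mu J$. Since $G$ is $k$-regular, the all-ones vector is an eigenvector of $A$ for the eigenvalue $k$, and primitivity forces $G$ to be connected, so this eigenvalue is simple and $m_k = 1$. Because $A$ is symmetric, the orthogonal complement of the all-ones vector is $A$-invariant and admits an orthonormal basis of eigenvectors of $A$; on each such vector $x$ we have $Jx = 0$, so the identity gives $\theta^2 - (\lambda-\mu)\theta - (k-\mu) = 0$ for the associated eigenvalue $\theta$. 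Solving this quadratic yields exactly the values $r$ and $s$ of the statement, with $r - s = \Delta$.

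Next I would verify that $r \neq s$ and that both values actually occur as eigenvalues. Primitivity of $G$ is equivalent to $0 < \mu < k$: if $\mu = 0$ then $G$ is a disjoint union of cliques, and if $\mu = k$ then $G$ is complete multipartite, both imprimitive. Hence $\Delta^2 = (\lambda-\mu)^2 + 4(k-\mu) \ge 4(k-\mu) > 0$, so $\Delta > 0$ and $r > s$. If one of $r,s$ failed to occur, $A$ would have only two distinct eigenvalues, so its spectral decomposition would read $A = \frac{k}{v}J + t\!\left(I - \frac{1}{v}J\right)$ for some $t \in \{r,s\}$; comparing the zero diagonal and the $0/1$ off-diagonal entries of $A$ then forces $G$ to be a complete graph, contradicting primitivity. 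Thus $r$ and $s$ are genuine eigenvalues with positive multiplicities $m_r, m_s$.

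Finally, $m_r$ and $m_s$ are pinned down by two linear equations: $1 + m_r + m_s = v$, since the three eigenspaces span the whole space, and $k + m_r r + m_s s = \operatorname{tr}(A) = 0$, since $A$ has zero diagonal. Solving this system using $r - s = \Delta \neq 0$ gives
$$
m_r = -\frac{(v-1)s + k}{r-s}, \qquad m_s = \frac{(v-1)r + k}{r-s},
$$
as claimed.

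The step I expect to be the main obstacle is the middle one: translating "primitive" into the usable inequalities $0 < \mu < k$ and ruling out the degenerate situation in which one of $r,s$ is absent, i.e. showing that a primitive strongly regular graph has exactly three distinct eigenvalues. Everything else reduces to solving a quadratic equation and a $2\times 2$ linear system.
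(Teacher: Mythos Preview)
The paper does not supply its own proof of this lemma: it is quoted verbatim as \cite[Theorem 5.2.1]{GM15} and used as background. Your argument is the standard textbook derivation (adjacency identity $A^2 = kI + \lambda A + \mu(J-I-A)$, restriction to $\mathbf{1}^\perp$, trace equations for the multiplicities) and is correct; it is essentially the proof one finds in the cited reference, so there is nothing to compare.
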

Note that $s < 0 < r$ holds. For the primitive strongly regular graph $G$ from the above lemma, the matrix
$$
\left(
  \begin{array}{c|cc}
    1 & k & v-1-k\\
    m_{r} & r & -1-r \\
    m_{s} & s & -1-s\\
  \end{array}
\right).
$$
is called the \emph{modified matrix of eigenvalues}. The first column gives the dimensions of the eigenspaces (i.e., the multiplicities of the eigenvalues); the second column contains the eigenvalues of the graph, and the third gives the eigenvalues of its complement.

Let $\theta$ be an eigenvalue of a graph $G$. A real-valued function $f$ on the vertex set of $G$ is called an \emph{eigenfunction}
of the graph $G$ corresponding to the eigenvalue $\theta$ (or a \emph{$\theta$-eigenfunction} of $G$), if $f \not \equiv 0$ and
for any vertex $u$ in $G$ the condition
\begin{equation}\label{LocalCondition}
\theta\cdot f(u)=\sum_{\substack{w\in{G(u)}}}f(w)
\end{equation}
holds, where $G(u)$ is the set of neighbours of the vertex $u$.
Although eigenfunctions of graphs receive less attention of researchers in contrast to their eigenvalues, there are still plenty of related literature. We refer to the recent survey \cite{SV21} for a summary of results on the problem of finding the minimum cardinality of support of eigenfunctions of graphs and characterising the optimal eigenfunctions. For an eigenspace of a graph, an eigenfunction is called \emph{optimal} if it has the minimum cardinality of support among all eigenfunctions in this eigenspace.

The following lemma gives a lower bound for the number of non-zeroes (i.e., the cardinality of the support) for an eigenfunction of a strongly regular graph. In fact, this is a special case of a more general result for distance-regular graphs \cite[Section 2.4]{KMP16}. 
\begin{lemma}\label{WDB}
Let $G$ be a primitive strongly regular graph with parameters $(v,k,\lambda,\mu)$ and let $\theta$ be a non-principal eigenvalue of $G$. Then an eigenfunction of $G$ corresponding to the eigenvalue $\theta$ has at least $$1+|\theta|+\bigg|\frac{(\theta-\lambda)\theta-k}{\mu}\bigg|$$ non-zeroes.
\end{lemma}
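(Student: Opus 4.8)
The plan is to run the weight-distribution argument, specialised to diameter two. Let $f$ be a $\theta$-eigenfunction of $G$. Since $f \not\equiv 0$, first I would pick a vertex $u$ at which $|f|$ attains its maximum value over $V(G)$ and, after rescaling $f$, assume $f(u) = 1$; then $|f(v)| \le 1$ for every vertex $v$. Because $G$ is primitive strongly regular, it is connected of diameter two, so $V(G)$ splits into the three cells $\{u\}$, $G(u)$ (the neighbours of $u$) and $G_2(u)$ (the vertices at distance $2$ from $u$). I would set $w_1 := \sum_{v \in G(u)} f(v)$ and $w_2 := \sum_{v \in G_2(u)} f(v)$; the proof then reduces to evaluating $w_1$ and $w_2$ in terms of $\theta,\lambda,\mu,k$, and to converting $|w_1|$ and $|w_2|$ into lower bounds on the number of nonzeros of $f$ in the corresponding cells.

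Evaluating the weights: the local condition (\ref{LocalCondition}) at $u$ gives $w_1 = \theta f(u) = \theta$ immediately. For $w_2$ I would sum (\ref{LocalCondition}) over all $v \in G(u)$. The left-hand side is $\theta w_1 = \theta^2$. On the right-hand side, the coefficient of $f(w)$ is $|\{v \in G(u) : v \sim w\}|$, which equals $k$ when $w = u$, equals $\lambda$ when $w \in G(u)$, and equals $\mu$ when $w \in G_2(u)$; hence $\theta^2 = k + \lambda w_1 + \mu w_2 = k + \lambda\theta + \mu w_2$. Since $G$ is primitive we have $\mu \ge 1$, so $w_2 = \big((\theta - \lambda)\theta - k\big)/\mu$.

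Converting to a counting bound: if $f$ restricted to a cell $C$ sums to $w$ and $|f(v)| \le 1$ on $C$, then $|w| \le \sum_{v \in C,\, f(v) \ne 0} |f(v)| \le |\{v \in C : f(v) \ne 0\}|$, so $C$ contains at least $|w|$ vertices of the support of $f$. Applying this to $C = G(u)$ and $C = G_2(u)$, and adding $u$ itself (which lies in the support and in neither cell), the support of $f$ has at least
$$1 + |w_1| + |w_2| = 1 + |\theta| + \bigg|\frac{(\theta - \lambda)\theta - k}{\mu}\bigg|$$
vertices, which is the asserted bound.

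The only computational step is the double counting that produces $w_2$, and the only conceptual point is the choice of $u$ at a point of maximum modulus: that choice is exactly what makes the normalisation $|f(v)| \le 1$ available and hence turns the exact identities for $w_1$ and $w_2$ into honest counting bounds. I do not expect any serious obstacle beyond checking the standard facts that, for a primitive strongly regular graph, the cells $\{u\}$, $G(u)$, $G_2(u)$ are nonempty and partition $V(G)$ (diameter two) and that $\mu \ne 0$ (primitivity).
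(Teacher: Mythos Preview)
Your argument is correct: it is exactly the weight-distribution computation for a distance-regular graph of diameter two, and all the bookkeeping (the identity $\theta^2 = k + \lambda w_1 + \mu w_2$ from summing the local condition over $G(u)$, and the conversion of $|w_i|$ into a count of nonzeros via the normalisation $|f|\le 1$) is right.

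The paper does not actually carry out this computation; it simply observes that a primitive strongly regular graph is distance-regular of diameter~$2$ with $a_1=\lambda$, $b_0=k$, $c_2=\mu$, and invokes the general weight-distribution bound from \cite[Corollary~1]{KMP16}. Your proof is the specialisation of that cited argument to diameter two, written out in full. So the mathematics is the same, but you give a self-contained derivation whereas the paper defers to the literature; your version has the advantage that a reader does not need to chase the reference to see why the bound holds.
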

\begin{proof}
If follows from \cite[Corollary 1]{KMP16} and the fact that a primitive strongly regular graph with parameters $(v,k,\lambda,\mu)$ is a distance-regular graph of diameter $2$ with $a_1 = \lambda$, $b_0 = k$ and $c_2 = \mu$. 
\end{proof}

\medskip

The lower bound given in Lemma~\ref{WDB} is known as the {\em weight-distribution bound}. Next we explicitly determine the weight-distribution bound in terms of the corresponding eigenvalue.

\begin{corollary}\label{WDBsrg}
Let $G$ be a primitive strongly regular graph with non-principal eigenvalues $s<0<r$. Then an eigenfunction of $G$ corresponding to the eigenvalue $r$ has at least $2(r+1)$ non-zeroes, and an eigenfunction corresponding to the eigenvalue $s$ has at least $-2s$ non-zeroes.
\end{corollary}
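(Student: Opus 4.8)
The plan is to specialise the weight-distribution bound of Lemma~\ref{WDB} to $\theta = r$ and $\theta = s$ and then simplify the resulting expression using the quadratic satisfied by the eigenvalues. Recall from Lemma~\ref{EigenvaluesSRG} (or directly from the identity $A^2 = kI + \lambda A + \mu(J - I - A)$ for the adjacency matrix) that $r$ and $s$ are the two roots of $x^2 - (\lambda - \mu)x - (k - \mu) = 0$; equivalently $r + s = \lambda - \mu$ and $rs = \mu - k$. In particular, every non-principal eigenvalue $\theta \in \{r,s\}$ satisfies $\theta^2 = (\lambda - \mu)\theta + (k - \mu)$.

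First I would record the identity
\[
(\theta - \lambda)\theta - k \;=\; \theta^2 - \lambda\theta - k \;=\; (\lambda-\mu)\theta + (k-\mu) - \lambda\theta - k \;=\; -\mu(\theta + 1),
\]
valid for $\theta \in \{r,s\}$. Since $G$ is primitive we have $\mu > 0$, so $\bigl|\frac{(\theta-\lambda)\theta - k}{\mu}\bigr| = |\theta + 1|$, and the bound of Lemma~\ref{WDB} collapses to $1 + |\theta| + |\theta + 1|$.

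Second, for $\theta = r > 0$ both absolute values can be removed, giving $1 + r + (r+1) = 2(r+1)$, which is the first assertion. For $\theta = s$ we have $|s| = -s$, and it remains to check that $s + 1 < 0$. Since $G$ contains an edge, eigenvalue interlacing with $K_2$ gives $s \le -1$. Moreover $s = -1$ is impossible: it would force, via $rs = \mu - k$, the equality $r = k - \mu$, hence $\lambda = (r+s) + \mu = k - 1$; but $\lambda = k-1$ means that for any two adjacent vertices $u,v$ every neighbour of $u$ distinct from $v$ is also a neighbour of $v$, so $N[u] = N[v]$ and each connected component of $G$ is a clique, contradicting primitivity. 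Therefore $s < -1$, so $|s+1| = -(s+1)$ and the bound becomes $1 + (-s) + (-(s+1)) = -2s$, which is the second assertion.

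The whole proof is essentially the one-line algebraic reduction $(\theta-\lambda)\theta - k = -\mu(\theta+1)$ followed by a case split on the sign of $\theta+1$; the only step needing genuine care is the exclusion of $s = -1$, and this is precisely the place where primitivity of $G$ enters.
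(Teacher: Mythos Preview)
Your proof is correct and follows essentially the same route as the paper's: both use the quadratic $\theta^2-(\lambda-\mu)\theta-(k-\mu)=0$ to rewrite $(\theta-\lambda)\theta-k$ as $-\mu(\theta+1)$ and then evaluate the absolute values for $\theta=r$ and $\theta=s$. The only difference is that you explicitly justify $s<-1$ (via interlacing and primitivity), whereas the paper silently uses $|s+1|=-(s+1)$ without comment; your extra care here is a genuine, if minor, improvement.
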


\begin{proof}
Suppose $G$ is strongly regular with parameters $(v,k,\lambda,\mu)$. Note that $s,r$ are the two solutions of the quadratic equation $x^2-(\lambda-\mu)x-(k-\mu)=0$ according to Lemma~\ref{EigenvaluesSRG}. Thus, $(s-\lambda)s-k=-\mu(s+1)$ and $(r-\lambda)r-k=-\mu(r+1)$. It follows that
$$
1+|s|+\bigg|\frac{(s-\lambda)s-k}{\mu}\bigg|=1-s+\bigg|\frac{-\mu(s+1)}{\mu}\bigg|=1-s-(s+1)=-2s,
$$
$$
1+|r|+\bigg|\frac{(r-\lambda)r-k}{\mu}\bigg|=1+r+\bigg|\frac{-\mu(r+1)}{\mu}\bigg|=1+r+(r+1)=2(r+1).
$$
The proof finishes by applying Lemma~\ref{WDB}. 
\end{proof}

A clique in a strongly regular graph $G$ is called a \emph{Delsarte clique} if its size meets the Delsarte-Hoffman bound (see \cite[Proposition 1.3.2]{BCN89}).

The following lemma gives a combinatorial interpretation of the tightness of the weight-distribution bound for a non-principal eigenvalue of a strongly regular graph in terms of special induced subgraphs.

\begin{lemma}\label{OptimalEigenfunctionsSRG}
Let $G$ be a primitive strongly regular graph with non-principal eigenvalues $s,r$, where $s < 0 < r$. Then the following statements hold.\\
{\rm (1)} For an $s$-eigenfunction $f$, if the cardinality of support of $f$ meets the weight-distribution bound, then there exists an induced complete bipartite subgraph in $G$ with parts $T_0$ and $T_1$ of size $-s$. Moreover, up to multiplication by a constant, $f$ has value $1$ on the vertices of $T_0$ and value $-1$ on the vertices of $T_1$. \\
{\rm (2)} For an $r$-eigenfunction $f$, if the cardinality of support of $f$ meets the weight-distribution bound, then there exists an induced pair of isolated cliques $T_0$ and $T_1$ in $G$ of size $-\overline{s} = -(-1-r) = 1+r$. Moreover, up to multiplication by a constant, $f$ has value 1 on the vertices of $T_0$ and value $-1$ on the vertices of $T_1$.\\
{\rm (3)} If $G$ has Delsarte cliques and each edge of $G$ lies in a constant number of Delsarte cliques (for example, $G$ is an edge-transitive strongly regular graph with Delsarte cliques), then any copy (as an induced subgraph) of the complete bipartite graph with parts of size $-s$ in $G$ gives rise to an eigenfunction of $G$ whose cardinality of support meets the weight-distribution bound and which is of the form given in item {\rm(1)}.\\
{\rm (4)} If the complement of $G$ has Delsarte cliques and each edge of the complement of $G$ lies in a constant number of Delsarte cliques (for example, $G$ is a coedge-transitive strongly regular graph whose complement has Delsarte cliques), then any copy (as an induced subgraph) of a pair of isolated cliques of size $1+r$ in $G$ gives rise to an eigenfunction of $G$ whose cardinality of support meets the weight-distribution bound and which is of the form given in item {\rm(2)}.
\end{lemma}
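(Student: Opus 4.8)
\medskip

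The plan is to prove (1) directly from the equality case of the weight-distribution bound, to obtain (2) from (1) and (4) from (3) by passing to the complement $\overline{G}$ (which is again a primitive strongly regular graph), and to prove (3) by a short double count. (We assume throughout that $s$ is an integer, so that the bound $-2s$ is an integer; since $G$ is primitive, $s<-1$, hence $s\le-2$, and no degenerate configuration arises.)

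For (1), I would revisit the proof of Lemma~\ref{WDB}. Let $f$ be an $s$-eigenfunction scaled so that $\max_v|f(v)|=1$, pick $u$ with $f(u)=1$, write $S_i(u)$ for the sphere of radius $i$ about $u$, and set $w_i=\sum_{v\in S_i(u)}f(v)$. Summing the eigenvalue equation at $u$, and then over all vertices of $S_1(u)$ (sorting each neighbour by its distance to $u$ and using $b_0=k$, $a_1=\lambda$, $c_2=\mu$), gives $w_0=1$, $w_1=s$ and $\mu w_2=(s-\lambda)s-k=-\mu(s+1)$, so $w_2=-(s+1)>0$. As $|f|\le1$ and $V(G)=S_0(u)\cup S_1(u)\cup S_2(u)$, the support of $f$ has at least $1$, $-s$ and $-s-1$ vertices in $S_0(u),S_1(u),S_2(u)$ respectively, whence $|\mathrm{supp}(f)|\ge-2s$. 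If equality holds, each of these three estimates is an equality, and moreover $\sum_{v\in S_1(u)}f(v)=s$ with exactly $-s$ non-zero terms in $[-1,1]$ forces every such term to equal $-1$, while $\sum_{v\in S_2(u)}f(v)=-s-1$ with exactly $-s-1$ non-zero terms in $[-1,1]$ forces every such term to equal $1$. Thus $f$ is $\{1,-1,0\}$-valued; put $T_0=f^{-1}(1)$ and $T_1=f^{-1}(-1)$, so $|T_0|=1+(-s-1)=-s=|T_1|$. Every vertex of $T_0$ is a point where $|f|$ attains its maximum, so the same analysis applies with $u$ replaced by any $u'\in T_0$, and it gives $T_1=\mathrm{supp}(f)\cap S_1(u')$ and $T_0\setminus\{u'\}=\mathrm{supp}(f)\cap S_2(u')$; that is, $u'$ is adjacent to every vertex of $T_1$ and to no other vertex of $T_0$. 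Applying this also to $-f$ and the vertices of $T_1$, I conclude that $T_0$ and $T_1$ are cocliques and every vertex of $T_0$ is adjacent to every vertex of $T_1$, so the subgraph of $G$ induced on $T_0\cup T_1$ is the complete bipartite graph with parts $T_0,T_1$ of size $-s$, on which $f=\pm1$. This is (1).

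To deduce (2), let $f$ be an $r$-eigenfunction of $G$ whose support meets the weight-distribution bound, so that $|\mathrm{supp}(f)|=2(r+1)$ by Corollary~\ref{WDBsrg}. Being a non-principal eigenfunction, $f$ is orthogonal to the all-ones function, so $\sum_{w\in V(G)}f(w)=0$; since also $\sum_{w\in G(v)}f(w)=rf(v)$ for every $v$, we get $\sum_{w\in\overline{G}(v)}f(w)=-f(v)-rf(v)=-(1+r)f(v)$, i.e. $f$ is a $(-1-r)$-eigenfunction of $\overline{G}$. Here $-1-r$ is the least eigenvalue $\overline{s}$ of $\overline{G}$, because $s<-1$ makes the other non-principal eigenvalue $-1-s$ of $\overline{G}$ positive. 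Since $-2\overline{s}=2(r+1)=|\mathrm{supp}(f)|$, the support of $f$ also meets the weight-distribution bound of $\overline{G}$, so by (1), applied to $\overline{G}$, there is an induced complete bipartite subgraph of $\overline{G}$ with parts $T_0,T_1$ of size $-\overline{s}=1+r$ on which $f=\pm1$; in $G$ these form an induced pair of isolated cliques of size $1+r$, which is (2). The deduction of (4) from (3) is the mirror image: a pair of isolated cliques $T_0,T_1$ of size $1+r$ in $G$ is an induced complete bipartite subgraph of $\overline{G}$ with parts of size $1+r=-\overline{s}$, so by (3), applied to $\overline{G}$, the function $g=\chi_{T_0}-\chi_{T_1}$ is an $\overline{s}$-eigenfunction of $\overline{G}$ with support of size $-2\overline{s}=2(r+1)$; as $g$ is orthogonal to the all-ones function, the same computation in reverse shows $g$ is an $r$-eigenfunction of $G$, and its support of size $2(r+1)$ meets the bound of Corollary~\ref{WDBsrg}, with $g=\pm1$ on the isolated cliques $T_0,T_1$; this is (2).

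It remains to prove (3). Let $A,B$ be the parts of an induced complete bipartite subgraph of $G$ with $|A|=|B|=-s=:t$, so that $A$ and $B$ are cocliques and every vertex of $A$ is adjacent to every vertex of $B$. Put $f=\chi_A-\chi_B$, a non-zero $\{1,-1,0\}$-valued function with $|\mathrm{supp}(f)|=2t=-2s$. For $x\in A$ the eigenvalue equation reads $|A\cap G(x)|-|B\cap G(x)|=0-t=s\cdot f(x)$, and likewise for $x\in B$, so it remains only to verify $|A\cap G(x)|=|B\cap G(x)|$ for every $x\notin A\cup B$; I would do this by showing that $\sum_{x\notin A\cup B}\big(|A\cap G(x)|-|B\cap G(x)|\big)^2=0$. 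Double counting, using only the parameters of $G$, gives $\sum_{x\notin A\cup B}|A\cap G(x)|^2=\sum_{x\notin A\cup B}|B\cap G(x)|^2=t(k-t)+t(t-1)(\mu-t)$: the $t$ diagonal pairs contribute $k-t$ each, and for distinct $a,a'\in A$ the set $G(a)\cap G(a')$ has size $\mu$, contains all of $B$ and no vertex of $A$ (in particular $\mu\ge t$) and contributes $\mu-t$; similarly $\sum_{x\notin A\cup B}|A\cap G(x)|\cdot|B\cap G(x)|=t^2\lambda$, since for $a\in A$ and $b\in B$ the $\lambda$ common neighbours of the adjacent pair $a,b$ all avoid $A\cup B$. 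Hence the sum of squares equals $2t\big[(k-t)+(t-1)(\mu-t)-t\lambda\big]$, and substituting $k=\mu-rs$, $\lambda=\mu+r+s$ (from Lemma~\ref{EigenvaluesSRG}) and $t=-s$ makes the bracket vanish identically. Therefore $f$ is an $s$-eigenfunction with support of size $-2s$, of the form in (1). I expect the equality analysis in (1) to be the delicate part --- one must re-run the spherical count from every support vertex in order to pin down the full complete bipartite structure --- whereas (2) and (4) are routine transfers to $\overline{G}$ and (3) reduces to the single elementary double count above; note that the argument just given for (3) does not use the Delsarte-clique hypothesis, which enters the statement mainly through the role of Delsarte cliques in the intended applications.
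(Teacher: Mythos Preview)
Your proposal is correct, and it takes a genuinely different route from the paper. The paper's own proof is a two-line citation: it invokes \cite[Theorems~3 and~4]{KMP16} together with the remark that complementing a strongly regular graph preserves the non-principal eigenspaces. You instead give a fully self-contained argument: the equality analysis for~(1) is the standard sharpening of the weight-distribution count (re-run from every support vertex to force the complete bipartite structure), the transfers to $\overline{G}$ for~(2) and~(4) are routine, and your proof of~(3) via the variance identity
\[
\sum_{x\notin A\cup B}\bigl(|A\cap G(x)|-|B\cap G(x)|\bigr)^2
=2t\bigl[(k-t)+(t-1)(\mu-t)-t\lambda\bigr]=0
\]
is a clean double count. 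This last point is worth highlighting: as you yourself observe, your argument for~(3) never uses the Delsarte-clique hypothesis, so you have in fact established a stronger statement than the lemma records---any induced $K_{-s,-s}$ in a primitive strongly regular graph already gives an optimal $s$-eigenfunction. The paper's formulation carries the hypothesis because that is what is available from \cite[Theorem~4]{KMP16}, whose method is different. One trivial slip: at the end of your derivation of~(4) you write ``this is~(2)'' where you mean ``this is~(4)''.
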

\begin{proof}
It follows from \cite[Theorem 3, Theorem 4]{KMP16} and the fact that taking the complement of a strongly regular graph preserves the eigenspaces corresponding to the non-principal eigenvalues, which describes the support of an eigenfunction of a strongly regular graph that meets the weight-distribution bound.   
\end{proof}

Thus, in view of Lemma \ref{OptimalEigenfunctionsSRG}, to show the tightness of the weight-distribution bound for non-principal eigenvalues, it suffices to find a special induced subgraph (a pair of isolated cliques $T_0$ and $T_1$ or a complete bipartite graph with parts $T_0$ and $T_1$) and show that each vertex outside of $T_0 \cup T_1$ has the same number of neighbours in $T_0$ and $T_1$. Moreover, if we can verify the condition from Lemma \ref{OptimalEigenfunctionsSRG}(3) or \ref{OptimalEigenfunctionsSRG}(4), then it suffices to find a required special induced subgraph.

\subsection{Geometric Steiner systems} \label{bsts}
Let $V(n, q)$ be a vector space of dimension $n$ over a finite field $\mathbb{F}_q$. 
The \emph{affine space} $\operatorname{AG}(n, q)$ is the geometry whose points, lines, planes, etc., are the cosets of the subspaces of $V(n, q)$ of dimension $0, 1, 2, \ldots, n$. Denote by $AS(n,q)$ the Steiner system whose points and blocks are the points and the lines of the affine space $\operatorname{AG}(n,q)$, and let us call such a Steiner system an \emph{affine Steiner system}. Such a system is known to be a 2-$(q^n, q, 1)$ design and is called an \emph{affine design} throughout the paper.

The \emph{projective space} $\operatorname{PG}(n, q)$ is the geometry whose points, lines, planes, etc., are the subspaces of $V(n+1, q)$ of dimension $1, 2, \ldots, n+1$. Denote by $PS(n,q)$ the Steiner system whose points and blocks are the points and the lines of the projective space $\operatorname{PG}(n,q)$, and let us call such a Steiner system a \emph{projective Steiner system}. 
Such a system is known to be a 2-$((q^{n+1} - 1)/(q - 1), q+1, 1)$ design and is a called a \emph{projective design} throughout the paper.
 
The \emph{block graph of a $2$-$(N, M, 1)$ design} is the graph with the blocks of the design as the vertices in which two blocks are adjacent if and only if they intersect.

\begin{theorem}[{\cite[Theorem 5.3.1]{GM15}}]\label{BlockGraphsOfDesigns}
The block graph of a $2$-$(N, M, 1)$ design (that is not symmetric)
is strongly regular with parameters    
$$
\left(\frac{N(N-1)}{M(M-1)}, \frac{M(N-M)}{M-1}), (M-1)^2+\frac{N-1}{M-1}-2,M^2
\right)$$
and smallest eigenvalue $-M$.
\end{theorem}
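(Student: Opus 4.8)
The plan is to verify the three defining conditions of a strongly regular graph by counting configurations in the design, and then to read the eigenvalues off the resulting parameters. First I would record the two basic parameters of a $2$-$(N,M,1)$ design: double-counting pairs of points against blocks gives the number of blocks $b=\binom{N}{2}/\binom{M}{2}=\tfrac{N(N-1)}{M(M-1)}$, which will be the number $v$ of vertices of the block graph, and double-counting flags through a fixed point gives the replication number $r=\tfrac{N-1}{M-1}$. Since any two distinct blocks meet in at most one point (that is exactly what $\lambda=1$ says), a fixed block $B$ is adjacent precisely to the $M(r-1)$ pairwise distinct blocks sharing one of the $M$ points of $B$; hence the block graph is regular of degree $k=M(r-1)=\tfrac{M(N-M)}{M-1}$.

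For the parameter $\lambda$, take adjacent blocks $B_1,B_2$ meeting in a point $p$ and split their common neighbours into those containing $p$, of which there are $r-2$, and those avoiding $p$. A block $C$ of the second class meets $B_1$ in a unique point $p_1\in B_1\setminus\{p\}$ and $B_2$ in a unique point $p_2\in B_2\setminus\{p\}$ (uniqueness again from $\lambda=1$), with $p_1\ne p_2$; conversely, for each such pair $(p_1,p_2)$ the unique block through $p_1$ and $p_2$ is a common neighbour avoiding $p$. This is a bijection with $(B_1\setminus\{p\})\times(B_2\setminus\{p\})$, so the second class has $(M-1)^2$ elements and $\lambda=(M-1)^2+r-2=(M-1)^2+\tfrac{N-1}{M-1}-2$. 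For $\mu$, I first note that the hypothesis ``not symmetric'' is what guarantees that disjoint blocks exist (otherwise the block graph is complete and $-M$ is not an eigenvalue at all): a $2$-$(N,M,1)$ design in which every two blocks meet is symmetric, which one sees by passing to the dual incidence structure, where every two ``points'' then lie on a unique ``line'', and applying the de Bruijn--Erd\H{o}s theorem to get $b\le N$, forcing $b=N$. So take disjoint blocks $B_1,B_2$; every common neighbour meets each of $B_1,B_2$ in exactly one point, and the same bijection argument (now with the full product $B_1\times B_2$) yields $\mu=M^2$, independent of the chosen pair. This establishes strong regularity with the stated parameters.

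Finally, the non-principal eigenvalues of a strongly regular graph with parameters $(v,k,\lambda,\mu)$ are the roots of $x^2-(\lambda-\mu)x-(k-\mu)=0$; substituting the parameters above, a short computation shows $x=-M$ is a root (the other root being $\tfrac{N-M^2}{M-1}$). Since $\mu\le k$ in any strongly regular graph, the discriminant is at least $(\lambda-\mu)^2$, so one non-principal eigenvalue is $\ge 0$ and the other is $\le 0$; as $-M<0$, it is the smallest eigenvalue. I expect no deep obstacle: the only care needed is in the bijective bookkeeping for $\lambda$ (checking that the ``through $p$'' and ``avoiding $p$'' classes are disjoint and exhaustive) and in invoking non-symmetry so that $\mu$ is meaningful; the eigenvalue step is then a routine substitution, though one could alternatively derive $-M$ by verifying that the $r$ blocks through a point form a Delsarte clique and applying the Delsarte--Hoffman bound.
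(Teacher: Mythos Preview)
Your argument is correct and is essentially the standard proof of this classical fact; the paper does not give its own proof but merely cites \cite[Theorem~5.3.1]{GM15}, so there is nothing to compare against beyond noting that your counting-and-bijection approach is the one found in the cited reference.

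One small remark on the smallest-eigenvalue step: your appeal to ``$\mu\le k$ hence one root is nonnegative'' is fine, but it is worth observing that for a non-symmetric $2$-$(N,M,1)$ design one actually has $N\ge M^2$ automatically (the divisibility $M-1\mid N-1$ leaves no admissible value strictly between $M^2-M+1$ and $M^2$), so the other root $\tfrac{N-M^2}{M-1}$ is visibly nonnegative. This makes the conclusion that $-M$ is the smallest eigenvalue completely transparent without invoking general SRG inequalities.
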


The following proposition gives the weight-distribution bound for the smallest eigenvalue of the block graph of a 
2-$(N,M,1)$ design.
\begin{proposition}\label{k_prop}
A  $(-M)$-eigenfunction of the block graph of a 2-$(N,M,1)$ design has at least $2M$ non-zeroes.
\end{proposition}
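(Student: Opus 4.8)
The plan is to derive the bound directly from the two results already in hand: Theorem~\ref{BlockGraphsOfDesigns} identifies the block graph as a strongly regular graph with a known smallest eigenvalue, and Corollary~\ref{WDBsrg} turns the smallest-eigenvalue case of the weight-distribution bound into the explicit quantity $-2s$. So essentially the whole statement is an instance of Corollary~\ref{WDBsrg}, and the only work is to make the hypotheses match.

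First I would dispose of the trivial case. If the $2$-$(N,M,1)$ design is symmetric it is a projective plane of order $M-1$, so any two blocks meet and the block graph is the complete graph $K_N$, whose eigenvalues are $N-1$ and $-1$; hence $-M$ is not an eigenvalue for $M\ge 2$ and there is nothing to prove. Otherwise the design is not symmetric, Theorem~\ref{BlockGraphsOfDesigns} applies, and the block graph $G$ is strongly regular with the parameters listed there and with smallest eigenvalue $s=-M$.

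Next I would check that $G$ is primitive, so that Corollary~\ref{WDBsrg} is applicable. A strongly regular graph fails to be primitive only when $\mu=0$ (a disjoint union of cliques) or $\mu=k$ (a complete multipartite graph). Here $\mu=M^2\ne 0$, and a one-line computation with the parameters of Theorem~\ref{BlockGraphsOfDesigns} shows $\mu=k$ precisely when $N=M^2$, i.e. when the design is an affine plane of order $M$. In that single exceptional case the block graph is the complete multipartite graph with $M+1$ parts of size $M$ (the parts being the parallel classes of lines), and an elementary argument handles it directly: a $(-M)$-eigenfunction must vanish on every vertex outside the parts it meets, must be nowhere zero on each part it meets (otherwise an eigenvalue equation at a zero vertex of that part forces a zero elsewhere), and must meet at least two parts, so its support again has size at least $2M$. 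In all remaining cases $G$ is a primitive strongly regular graph with smallest eigenvalue $s=-M$, and Corollary~\ref{WDBsrg} gives at once that a $(-M)$-eigenfunction of $G$ has at least $-2s=2M$ non-zeroes.

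I do not expect a genuinely hard step here: the content is carried entirely by Theorem~\ref{BlockGraphsOfDesigns} and Corollary~\ref{WDBsrg}. The only point that needs a little care is the boundary between primitive and imprimitive block graphs, which singles out the affine and projective planes; since the applications in this paper concern $AS(n,q)$ with $n\ge 3$, where $N=q^{n}>q^{2}=M^{2}$, those degenerate cases never arise and the short argument via Corollary~\ref{WDBsrg} is all that is required.
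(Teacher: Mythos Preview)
Your argument is correct and follows exactly the route the paper takes: invoke Theorem~\ref{BlockGraphsOfDesigns} to identify the smallest eigenvalue as $-M$, then read off the bound $-2s=2M$ from Corollary~\ref{WDBsrg}. The paper's own proof is literally a one-line citation of these two results. Your additional care in checking primitivity and treating the symmetric-design and affine-plane boundary cases separately is not in the paper (which tacitly assumes the hypotheses of Corollary~\ref{WDBsrg} are met), but it is correct and arguably fills a small gap; in particular your observation that a $(-M)$-eigenfunction of the complete multipartite graph $K_{(M+1)\times M}$ is constant on parts, hence has support of size at least $2M$, is the right way to dispose of the affine-plane case.
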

\begin{proof}
    It follows from Theorem \ref{BlockGraphsOfDesigns} and Corollary \ref{WDBsrg}.
\end{proof}

\subsection{Reguli in projective spaces}
In this section, we follow the definitions from \cite{BR98}.
Let $P$ be a projective space. A set $S$ of subspaces of $P$ is called \emph{skew} if no two distinct subspaces of $S$ have a point in common. Let $S$ be a set of skew subspaces. A line is called a \emph{transversal} of $S$ if it intersects each subspace of $S$ in exactly one point. 

\begin{lemma}[{\cite[Lemma 2.4.1]{BR98}}]\label{twoskewlines}
Let $\ell_1$ and $\ell_2$ be two skew lines from $P$, and denote by $t$ a point outside $\ell_1$ and $\ell_2$. Then there is at most one transversal of $\ell_1$ and $\ell_2$ through $t$. If $P$ is $3$-dimensional, then there is exactly one transversal of $\ell_1$ and $\ell_2$ through $t$. 
\end{lemma}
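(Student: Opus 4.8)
The plan is to prove both assertions by pure incidence arguments in the subspace lattice of $P$, using only three elementary facts: two distinct lines through a common point span a plane; two lines inside a projective plane always meet; and two distinct planes of a $3$-dimensional projective space meet in a line.

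For the uniqueness part I would argue by contradiction. Suppose $m$ and $m'$ are two distinct transversals of $\ell_1$ and $\ell_2$, both passing through $t$. Since $t \notin \ell_1$, the point in which $m$ meets $\ell_1$ and the point in which $m'$ meets $\ell_1$ are distinct, for otherwise $m$ and $m'$ would share that common point and $t$, two distinct points, hence would coincide. The same holds for $\ell_2$. Therefore the plane $\pi := \langle m, m' \rangle$ spanned by the distinct concurrent lines $m$ and $m'$ contains two distinct points of $\ell_1$, hence contains $\ell_1$, and likewise contains $\ell_2$. Then $\ell_1$ and $\ell_2$ are two lines of the projective plane $\pi$, so they meet, contradicting skewness. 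This shows there is at most one transversal through $t$, and notably this step uses nothing about the dimension of $P$ beyond the existence of two skew lines.

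For the existence part, assume $P$ is $3$-dimensional. Since $t \notin \ell_1$, the span $\pi_1 := \langle \ell_1, t \rangle$ is a plane, and similarly $\pi_2 := \langle \ell_2, t \rangle$ is a plane; both contain $t$. If $\pi_1 = \pi_2$, then $\ell_1$ and $\ell_2$ would both lie in one plane and hence meet, contradicting skewness, so $\pi_1 \ne \pi_2$. In a $3$-dimensional projective space two distinct planes intersect in a line, so $m := \pi_1 \cap \pi_2$ is a line with $t \in m$. It remains to check that $m$ is a transversal: inside the plane $\pi_1$ the lines $m$ and $\ell_1$ either coincide or meet in exactly one point, and they cannot coincide because $\ell_1 = m \subseteq \pi_2$ would make $\ell_1$ and $\ell_2$ coplanar; hence $m$ meets $\ell_1$ in exactly one point, and symmetrically $m$ meets $\ell_2$ in exactly one point, so $m$ is the desired transversal through $t$.

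The whole argument is essentially bookkeeping of projective incidences, so the only care needed is in ruling out the degenerate configurations — verifying that the various spans are genuinely planes and that the claimed intersections are genuinely single points rather than empty or an entire line — which is precisely where the hypotheses ``$t$ outside $\ell_1,\ell_2$'', ``$\ell_1,\ell_2$ skew'' and ``$P$ is $3$-dimensional'' enter. I do not anticipate any substantive obstacle beyond this.
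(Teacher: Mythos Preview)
Your proof is correct and is the standard incidence argument; the paper itself does not prove this lemma but merely cites \cite[Lemma 2.4.1]{BR98}, and your argument is essentially the one given there.
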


Let $P$ be a $3$-dimensional projective space. A nonempty set $R$ of skew lines of $P$ is called a \emph{regulus} if the following statements are true:\\
(1) Through each point of each line of $R$ there is a transversal of $R$.\\
(2) Through each point of a transversal of $R$ there is a line of $R$. 

It follows from the definition that the set $R^{opp}$ of all transversals of a regulus $R$ again forms a regulus; we call it the \emph{opposite regulus} of $R$. 

If $P$ is a $3$-dimensional projective space over a finite field $\mathbb{F}_q$, then any regulus consists of exactly $q+1$ lines.

\begin{theorem}[{\cite[Theorem 2.4.3]{BR98}}]\label{threeskewlines}
Let $P$ be a $3$-dimensional projective space over field $\mathbb{F}$. Let $\ell_1,\ell_2,\ell_3$ be three skew lines of $P$. Then there is exactly one regulus through $\ell_1,\ell_2,\ell_3$.
\end{theorem}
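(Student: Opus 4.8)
The plan is to prove existence and uniqueness separately, handling existence by an explicit coordinate model and uniqueness by a purely synthetic transversal count. For existence, write $V$ for the $4$-dimensional vector space underlying $P$ and $U_1,U_2,U_3$ for the planes through the origin defining $\ell_1,\ell_2,\ell_3$. Since $\ell_1,\ell_2$ are skew, $V=U_1\oplus U_2$; since $\ell_3$ is skew to both, the projection of $U_3$ onto $U_1$ (resp.\ onto $U_2$) along the complementary summand is an isomorphism, so $U_3$ is the graph of a linear isomorphism $\phi\colon U_1\to U_2$. Fixing a basis $e_0,e_1$ of $U_1$ and putting $e_2=\phi(e_0)$, $e_3=\phi(e_1)$, we may assume $\ell_1=\langle e_0,e_1\rangle$, $\ell_2=\langle e_2,e_3\rangle$, $\ell_3=\langle e_0+e_2,\,e_1+e_3\rangle$.

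Now consider the two one-parameter families
\[
R=\bigl\{\,\langle \mu_0 e_0+\mu_1 e_2,\ \mu_0 e_1+\mu_1 e_3\rangle\,\bigr\},
\qquad
\mathcal{T}=\bigl\{\,\langle \lambda_0 e_0+\lambda_1 e_1,\ \lambda_0 e_2+\lambda_1 e_3\rangle\,\bigr\},
\]
with $(\mu_0:\mu_1)$ and $(\lambda_0:\lambda_1)$ ranging over the points of a projective line (geometrically this is the Segre picture: in these coordinates the lines of $R$ sweep out the hyperbolic quadric $x_0x_3=x_1x_2$, and $R,\mathcal{T}$ are its two rulings). The following are all short linear-algebra checks: (a) two distinct lines inside $R$, and likewise inside $\mathcal{T}$, are skew; (b) the line of $R$ with parameter $\mu$ and the line of $\mathcal{T}$ with parameter $\lambda$ meet in exactly the point $\langle \lambda_0\mu_0 e_0+\lambda_1\mu_0 e_1+\lambda_0\mu_1 e_2+\lambda_1\mu_1 e_3\rangle$; (c) every point of a line of $R$ lies on a unique line of $\mathcal{T}$, and conversely (just read off the parametrised points); (d) a line meeting every member of $\mathcal{T}$ must lie in $R$, and symmetrically --- here one computes the intersections of such a line with the two ``coordinate'' members of $\mathcal{T}$, and a single $2\times 2$ determinant forces its parameter to be consistent. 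Items (a)--(d) say precisely that $R$ is a regulus with $R^{opp}=\mathcal{T}$, and $\ell_1,\ell_2,\ell_3$ are the members of $R$ with parameters $(1:0),(0:1),(1:1)$; so a regulus through $\ell_1,\ell_2,\ell_3$ exists.

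For uniqueness, let $R'$ be an arbitrary regulus with $\ell_1,\ell_2,\ell_3\in R'$, and let $\mathcal{T}$ again denote the set of all transversals of $\{\ell_1,\ell_2,\ell_3\}$. Applying Lemma~\ref{twoskewlines} to the skew lines $\ell_2,\ell_3$: through each point $p$ of $\ell_1$ there is exactly one transversal of $\ell_2,\ell_3$, and it meets $\ell_1$ only at $p$ (otherwise it would equal $\ell_1$, which meets neither $\ell_2$ nor $\ell_3$); hence $\mathcal{T}$ is in bijection with the points of $\ell_1$ and its members are pairwise skew. Every transversal of $R'$ meets $\ell_1,\ell_2,\ell_3$, so $(R')^{opp}\subseteq\mathcal{T}$; conversely, the first regulus axiom applied to $\ell_1\in R'$ produces, through each point of $\ell_1$, a transversal of $R'$, and these already exhaust $\mathcal{T}$, so $(R')^{opp}=\mathcal{T}$. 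Since every line of $R'$ meets every member of $\mathcal{T}$, item (d) gives $R'\subseteq R$. For the reverse inclusion take $m\in R$; if $m\in\{\ell_1,\ell_2,\ell_3\}$ we are done, so assume not, fix $t^{*}\in\mathcal{T}$, set $x:=m\cap t^{*}$, and use the second regulus axiom (with the transversal $t^{*}$ of $R'$) to obtain a line $m'\in R'$ through $x$. Both $m$ and $m'$ (the latter because $m'\in R'\subseteq R$) meet every member of $\mathcal{T}$; choosing two further members $t^{**},t^{***}$ of $\mathcal{T}$, distinct from $t^{*}$ and from each other, we find that $m$ and $m'$ are both transversals of the skew pair $\{t^{**},t^{***}\}$ passing through the point $x$, which lies on neither of them. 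Lemma~\ref{twoskewlines} forces $m=m'\in R'$, so $R\subseteq R'$ and therefore $R'=R$.

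The one step needing care is (d): showing that the two explicitly written families are each other's \emph{full} sets of transversals, not merely subfamilies. This is what upgrades ``$R$ obeys the regulus axioms'' to ``$R^{opp}=\mathcal{T}$'', and it is exactly where the skewness of $\ell_1,\ell_2,\ell_3$ --- encoded in the invertibility of $\phi$ --- gets used. Everything else is one determinant evaluation together with repeated applications of Lemma~\ref{twoskewlines}.
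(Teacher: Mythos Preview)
The paper does not give a proof of this statement; it is quoted verbatim as \cite[Theorem~2.4.3]{BR98} and used as a black box. Your self-contained argument is correct and is essentially the standard textbook proof: the normalisation via $V=U_1\oplus U_2$ and $U_3=\operatorname{graph}(\phi)$ is exactly how Beutelspacher--Rosenbaum reduce to the model quadric, the verifications (a)--(d) are routine linear algebra as you say, and the uniqueness step --- showing $(R')^{opp}$ must coincide with the full transversal set $\mathcal{T}$ of $\{\ell_1,\ell_2,\ell_3\}$ and then pulling $R'$ back to $R$ via (d) --- is sound.

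One small simplification in your final paragraph: once you have $m'\in R'\subseteq R$ and $m\in R$ both passing through $x$, item (a) alone (the lines of $R$ are pairwise skew) already gives $m=m'$; the extra appeal to Lemma~\ref{twoskewlines} with $t^{**},t^{***}$ is not needed.
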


\begin{lemma}\label{ndimregulus} 
The following statements hold.\\
{\rm (1)} Any two skew lines in $\operatorname{PG}(n,q)$ uniquely determine a subspace $\operatorname{PG}(3,q)$, which also contains all transversals to these lines.\\
{\rm (2)} Let $U$ be an induced complete bipartite subgraph in $J_q(n,2)$. Then all the lines corresponding to the vertices of $U$ lie in a subspace $\operatorname{PG}(3,q)$, and the parts of $U$ form a regulus and an opposite regulus in this subspace.\\
\end{lemma}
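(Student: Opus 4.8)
The plan is to argue entirely by incidence geometry inside the underlying vector space, reducing every question to the $3$-dimensional case, where Lemma~\ref{twoskewlines} and Theorem~\ref{threeskewlines} apply. For part~(1), regard $\ell_1$ and $\ell_2$ as $2$-dimensional subspaces; being skew means $\ell_1\cap\ell_2=\{0\}$, so $\langle\ell_1,\ell_2\rangle=\ell_1+\ell_2$ is $4$-dimensional, that is, a $\operatorname{PG}(3,q)$ containing both lines. Any subspace containing $\ell_1$ and $\ell_2$ contains $\langle\ell_1,\ell_2\rangle$, so this $\operatorname{PG}(3,q)$ is the unique $3$-dimensional subspace through the two lines. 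Moreover, a transversal $m$ meets $\ell_i$ in a point $p_i$, hence $m=\langle p_1,p_2\rangle\subseteq\langle\ell_1,\ell_2\rangle$, which proves the last assertion of~(1).

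For part~(2), let $A$ and $B$ be the parts of $U$. Since two distinct lines meeting in a point are adjacent in $J_q(n,2)$, the lines within $A$ are pairwise skew, likewise the lines within $B$, whereas every line of $A$ meets every line of $B$ in exactly one point. The first step is to place $A\cup B$ inside one $\operatorname{PG}(3,q)$: fix $a_1,a_2\in A$ and let $\Sigma=\langle a_1,a_2\rangle$ be the $\operatorname{PG}(3,q)$ from part~(1). Every line of $B$ is a transversal of $\{a_1,a_2\}$ and hence lies in $\Sigma$; then every further line $a\in A$ meets the skew lines $b_1,b_2\in B$ in two necessarily distinct points of $\Sigma$, so $a\subseteq\Sigma$ as well. (This uses $|A|,|B|\ge 2$, which is the only relevant case; for supports of optimal eigenfunctions the parts have size $q+1$.)

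The second step identifies the regulus structure inside $\Sigma$, assuming $|A|,|B|\ge 3$. Let $R$ be the unique regulus of $\Sigma$ through three lines $a_1,a_2,a_3\in A$ (Theorem~\ref{threeskewlines}). The crux is the following sublemma: in $\operatorname{PG}(3,q)$ the opposite regulus $R^{opp}$ consists of \emph{all} common transversals of $a_1,a_2,a_3$. Indeed, let $m$ be such a transversal, meeting $a_i$ in $p_i$; the regulus axiom furnishes a transversal $m'$ of $R$ through $p_1$; both $m$ and $m'$ lie in the plane $\langle p_1,a_2\rangle$, which meets $a_3$ in exactly one point, so $m$ and $m'$ share the two distinct points $p_1$ and $m\cap a_3=m'\cap a_3$, forcing $m=m'\in R^{opp}$. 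Therefore $B\subseteq R^{opp}$. Running the same argument with the roles of $A$ and $B$ exchanged (so that $R^{opp}$, being a regulus through $b_1,b_2,b_3$, plays the role of $R$) shows that each line of $A$ is a common transversal of $b_1,b_2,b_3$, hence lies in $(R^{opp})^{opp}$; since $(R^{opp})^{opp}$ and $R$ are both reguli through $a_1,a_2,a_3$, they coincide by uniqueness, so $A\subseteq R$. When $|A|=|B|=q+1$ the inclusions $A\subseteq R$ and $B\subseteq R^{opp}$ become equalities, and the parts of $U$ are exactly a regulus and its opposite.

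I expect the sublemma in the third paragraph — that the common transversals of three lines of a regulus in $\operatorname{PG}(3,q)$ exhaust its opposite regulus — to be the main obstacle; this is precisely where $3$-dimensionality enters, via the fact that there a line always meets a plane, and it is what prevents the statement from degenerating. The remaining bookkeeping, namely complete bipartite subgraphs having a part of size at most two (handled, when needed, by choosing an auxiliary transversal line of the two lines in the small part and applying the sublemma to it), does not occur in our applications.
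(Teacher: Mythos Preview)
Your proof is correct. Part~(1) is essentially the paper's argument (two skew lines, as $2$-dimensional subspaces with trivial intersection, span a $4$-dimensional subspace), with the containment of transversals made explicit. For part~(2) the paper writes only ``It follows from item~(1)'' and gives no further details; your argument---first using~(1) to trap $A\cup B$ in a single $\operatorname{PG}(3,q)$, then invoking Theorem~\ref{threeskewlines} and your sublemma that the common transversals of three lines of a regulus exhaust $R^{opp}$---is a correct and natural way to supply the omitted steps, and your handling of the size constraints ($|A|,|B|=q+1$ in the intended application) is appropriate.
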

\begin{proof}
(1) These two lines, being two 2-dimensional vector spaces, uniquely generate a 4-dimensional vector space, which is a subspace $\operatorname{PG}(3,q)$.

(2) It follows from item (1). 
\end{proof}

\subsection{Equitable partitions as eigenfunctions}

Let $G$ be a $k$-regular graph with the vertex set $V(G)$. Let $\Pi := (V_1,\ldots,V_t)$ be a partition of $V(G)$ into $t$ parts ($t$-partition). The partition $\Pi$ is said to be an\emph{equitable} $t$-partition if for any $i,j \in \{1,\ldots,t\}$ there is
a constant $p_{ij}$ such that any vertex from the part $V_i$ is adjacent to precisely $p_{ij}$ vertices from the part $V_j$.
The square matrix $P_\Pi:=(p_{ij})_{i,j = 1}^t$ is called the quotient matrix of the equitable $t$-partition $\Pi$.
Since all row sums of the adjacency matrix $A$ of $G$ and the quotient matrix $P_\Pi$ are equal to $k$,
both matrices have eigenvalue $k$. Moreover, it follows from \cite[Theorem 9.3.3]{GR01},
that every eigenvalue of $P$ is an eigenvalue of $A$. The eigenvalue $k$ of $A$ is called \emph{principal}.
An eigenvalue $\theta$ of $A$ is called \emph{non-principal} if $\theta \ne k$. If $\Pi$ is an equitable $2$-partition,
then precisely one non-principal eigenvalue $\theta$ is an eigenvalue of the quotient matrix $P_\Pi$.
In this case we say that the equitable $2$-partition $\Pi$ is $\theta$-\emph{equitable}.

\begin{lemma}\label{theta}
Let $\Pi$ be a $\theta$-\emph{equitable} $2$-partition of $G$ with quotient matrix
$$
P_\Pi =
\left(
  \begin{array}{cc}
   p_{11} & p_{12} \\
  p_{21} & p_{22} \\
  \end{array}
\right).
$$
Then the eigenvalues of $P_\Pi$ are given by $k = p_{11} + p_{12} = p_{21} + p_{22}$ and $\theta = p_{11}-p_{21} = p_{22}-p_{12}$.
\end{lemma}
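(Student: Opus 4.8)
\textbf{Proof proposal for Lemma \ref{theta}.}

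The plan is to use directly the definition of an equitable $2$-partition together with the fact that $P_\Pi$ is a $2\times 2$ matrix. First I would observe that every row sum of $P_\Pi$ equals $k$: a vertex $v \in V_i$ has exactly $k$ neighbours, each of which lies in $V_1$ or $V_2$, so $p_{i1}+p_{i2}=k$ for $i=1,2$; this gives $k = p_{11}+p_{12} = p_{21}+p_{22}$ and shows that the all-ones vector $(1,1)^T$ is an eigenvector of $P_\Pi$ with eigenvalue $k$. Hence $k$ is one of the two eigenvalues of $P_\Pi$.

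Next, since $P_\Pi$ is a $2\times 2$ matrix, its two eigenvalues (with multiplicity) are $k$ and $\theta$, where by definition of a $\theta$-equitable $2$-partition the second eigenvalue is the non-principal one. I would then compute $\theta$ from the trace and determinant, or equivalently exhibit an eigenvector. The cleanest route is to note that the sum of the eigenvalues equals the trace: $k + \theta = p_{11}+p_{22}$, so $\theta = p_{11}+p_{22}-k = p_{11}+p_{22}-(p_{11}+p_{12}) = p_{22}-p_{12}$, and likewise $\theta = p_{11}+p_{22}-(p_{21}+p_{22}) = p_{11}-p_{21}$. This yields both claimed expressions $\theta = p_{11}-p_{21} = p_{22}-p_{12}$, and also confirms consistency: $p_{11}-p_{21} = p_{22}-p_{12}$ follows from $p_{11}+p_{12} = p_{21}+p_{22}$.

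There is essentially no obstacle here; the only point requiring a word of care is the assertion, invoked implicitly from the surrounding discussion, that exactly one non-principal eigenvalue of $G$ occurs as an eigenvalue of $P_\Pi$ — but for a $2\times 2$ quotient matrix this is immediate, since after accounting for the principal eigenvalue $k$ (which might or might not be simple in $P_\Pi$; if $p_{12}=p_{21}=0$ then $\theta=k$ would collapse, but such a partition is disconnected and excluded by the convention $x_1\neq x_2$ in the associated eigenfunction) exactly one eigenvalue remains. One may optionally also verify directly that the vector $(p_{12},\,-p_{21})^T$ is a $\theta$-eigenvector of $P_\Pi$ by a one-line computation using the row-sum identities, which makes the eigenfunction representing $\Pi$ explicit.
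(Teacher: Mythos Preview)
The paper states this lemma without proof, treating it as a well-known elementary fact about quotient matrices of equitable $2$-partitions. Your argument is correct and is exactly the standard one: row sums equal $k$ by regularity, giving the principal eigenvalue, and the trace identity $k+\theta=p_{11}+p_{22}$ then yields both claimed expressions for $\theta$.
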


For a sequence of distinct real values $S$, we say that a function $f$ is $S$-valued if
the image $\operatorname{Im}(f)$ coincides with the set of elements arising in $S$. Suppose a function $f:V(G) \rightarrow \mathbb{R}$ is $(x_1,x_2)$-valued.
Then the vertex set is naturally partitioned into two parts $(V_1,V_2)$ where the function $f$ takes values $x_1$ and $x_2$ on $V_1$ and
$V_2$, respectively. We denote such a $2$-partition by $\Pi_{(x_1,x_2)}(f)$. 

The following lemma, which establishes a one-to-one correspondence between $\theta$-equitable $2$-partitions and $\theta$-eigenfunctions that take
precisely two values, is well-known.
\begin{lemma}\label{1to1}
Let $G$ be a regular graph. For a partition $\Pi = (V_1,V_2)$ of the vertex set of $G$,
the following statements are equivalent.\\
{\rm (i)} The $(x_1,x_2)$-valued function $f$ such that $\Pi_{(x_1,x_2)}(f) = \Pi$ is a $\theta$-eigenfunction of $G$.\\
{\rm (ii)} The partition $\Pi$ is $\theta$-equitable and $(x_1,x_2)^T$ is an eigenvector of $P_\Pi$ corresponding to the eigenvalue $\theta$.
\end{lemma}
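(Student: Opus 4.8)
The plan is to verify both implications directly from the local eigenfunction condition (\ref{LocalCondition}), using nothing beyond the definition of an equitable $2$-partition. Throughout, let $f$ be the $(x_1,x_2)$-valued function with $f\equiv x_1$ on $V_1$ and $f\equiv x_2$ on $V_2$; since $f$ is $(x_1,x_2)$-valued we have $x_1\neq x_2$, so at most one of $x_1,x_2$ vanishes, $f\not\equiv 0$, and the non-triviality clause in the definition of an eigenfunction is automatic.

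For (i)$\Rightarrow$(ii), suppose $f$ is a $\theta$-eigenfunction. Fix $u\in V_1$ and let $d(u)$ be the number of neighbours of $u$ lying in $V_1$; by $k$-regularity $u$ has $k-d(u)$ neighbours in $V_2$, so (\ref{LocalCondition}) reads $\theta x_1=d(u)x_1+(k-d(u))x_2$. Solving, $d(u)=(\theta x_1-kx_2)/(x_1-x_2)$, which does not depend on the choice of $u\in V_1$; the same computation applied to $V_2$ shows every vertex of $V_2$ has the same number of neighbours in each part. Hence $\Pi$ is equitable, and the two counts just obtained are exactly the entries $p_{11},p_{12}$ (respectively $p_{21},p_{22}$) of $P_\Pi$, so the two instances of (\ref{LocalCondition}), one at a vertex of $V_1$ and one at a vertex of $V_2$, say precisely that $P_\Pi(x_1,x_2)^T=\theta(x_1,x_2)^T$.

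For (ii)$\Rightarrow$(i), suppose $\Pi$ is equitable with quotient matrix $P_\Pi=(p_{ij})$ and $(x_1,x_2)^T$ is a $\theta$-eigenvector of $P_\Pi$. For any $u\in V_i$ with $i\in\{1,2\}$, each neighbour of $u$ contributes $x_1$ or $x_2$ to $\sum_{w\in G(u)}f(w)$ according to the part it lies in, so $\sum_{w\in G(u)}f(w)=p_{i1}x_1+p_{i2}x_2=(P_\Pi(x_1,x_2)^T)_i=\theta x_i=\theta f(u)$; thus $f$ satisfies (\ref{LocalCondition}) at every vertex and is a $\theta$-eigenfunction.

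I do not expect a genuine obstacle here: the argument is bookkeeping with the two definitions, and the only place a hypothesis is actually used is $x_1\neq x_2$, which is needed both to divide by $x_1-x_2$ in the first implication and to guarantee $f\not\equiv 0$. Equivalently, one could remark that this is the $t=2$ case of the standard correspondence between equitable $t$-partitions and eigenvectors of the quotient matrix lifted back to the vertex set.
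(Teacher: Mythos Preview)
Your argument is correct and is exactly the standard verification one would expect. The paper itself does not supply a proof of this lemma; it merely states it as ``well-known'' and moves on, so there is no alternative approach to compare against.
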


In view of Lemma \ref{1to1}, the investigation of equitable $2$-partitions is equivalent to the investigation of eigenfunctions
with two values.

\section{Constructions and characterisation of optimal eigenfunctions}\label{ConstructionsAndCharacterisation}
In this section we prove Theorem \ref{CharacterisationGrassmann}, Theorem \ref{CharacterisationAffine}, Proposition \ref{ConstructionOfAffineRegulus} and Proposition \ref{WDBPlus2}. 
\subsection{Proof of Theorem \ref{CharacterisationGrassmann}}
In this section we consider the block graphs of the projective Steiner system $PS(n,q)$, known as the class of strongly regular Grassmann graphs $J_q(n+1,2)$. A Grassmann graph $J_q(n+1,2)$ is known to be arc-transitive and have Delsarte cliques (the pencil of lines through a point is an example of a Delsarte clique). In view of Lemma \ref{OptimalEigenfunctionsSRG}(3), any copy (as an induced subgraph) of the complete bipartite graph with parts of size $q+1$ in $J_q(n+1,2)$ gives rise to an $-(q+1)$-eigenfunction of $J_q(n+1,2)$ whose cardinality of support meets the weight-distribution bound.
It follows from Lemma \ref{ndimregulus}(2) that the reguli in all 3-dimensional subspaces of $\operatorname{PG}(n,q)$ exhaust the induced complete bipartite subgraphs with parts of size $q+1$ in $J_q(n+1,2)$, which gives the required characterisation of optimal $-(q+1)$-eigenfunctions. This proves Theorem \ref{CharacterisationGrassmann}.

\subsection{Proof of Theorem \ref{CharacterisationAffine}, Proposition \ref{ConstructionOfAffineRegulus} and Proposition \ref{WDBPlus2}}
In this section we consider the block graphs of the affine Steiner system $AS(n,q)$, denoted by $X_q(n,1)$.  These graphs are known to be arc-transitive and have Delsarte cliques (the pencil of lines through a point is an example of a Delsarte clique). In view of Lemma \ref{OptimalEigenfunctionsSRG}(3), any copy (as an induced subgraph) of the complete bipartite graph with parts of size $q$ in $X_q(n,1)$ gives rise to an $-q$-eigenfunction of $X_q(n,1)$ whose cardinality of support meets the weight-distribution bound. 

First, consider the 3-dimensional case. Let $U$ be an induced complete bipartite subgraph in $X_q(3,1)$ with parts $\ell_{1,1},\ell_{1,2},\ldots, \ell_{1,q}$ and $\ell_{2,1},\ell_{2,2},\ldots, \ell_{2,q}$. Note that any two lines in the same part are non-intersecting and any two lines in different parts are intersecting. There are two types of pairs of non-intersecting lines in $\operatorname{AG}(3,q)$: parallel lines and skew lines.

\begin{lemma}\label{GridsInAG3q}
For any $i \in \{1,2\}$, the lines $\ell_{i,1},\ell_{i,2},\ldots, \ell_{i,q}$ are either pairwise parallel, forming a parallel class in a subplane $\operatorname{AG}(2,q)$, or pairwise skew.   
\end{lemma}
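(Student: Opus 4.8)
The plan is to fix one part, say $i=1$, and argue the other case by the same reasoning. The lines $\ell_{1,1},\dots,\ell_{1,q}$ are pairwise non-intersecting, since they lie in the same part of the induced complete bipartite subgraph $U$, and in $\operatorname{AG}(3,q)$ two non-intersecting lines are either parallel or skew. So the lemma reduces to a dichotomy: I would show that if \emph{some} pair among $\ell_{1,1},\dots,\ell_{1,q}$ is parallel, then \emph{all} of them are parallel and coplanar, forming a parallel class of a subplane; and otherwise no pair is parallel, so they are pairwise skew, which is the second alternative. The case with no parallel pair is then immediate.

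So assume, after relabelling, $\ell_{1,1} \parallel \ell_{1,2}$, and let $\pi := \langle \ell_{1,1}, \ell_{1,2} \rangle$ be the unique affine plane they span (they are distinct since the part has $q \ge 2$ distinct lines). I will use two elementary incidence facts in $\operatorname{AG}(3,q)$: (a) a line meeting two disjoint lines of $\pi$ is contained in $\pi$, because it passes through two distinct points of $\pi$ and is therefore the line they determine; and (b) a line not contained in $\pi$ meets $\pi$ in at most one point. Since $U$ is complete bipartite, every line $\ell_{2,k}$ of the second part meets both $\ell_{1,1}$ and $\ell_{1,2}$, hence $\ell_{2,k} \subseteq \pi$ by (a). The lines $\ell_{2,1},\dots,\ell_{2,q}$ are then pairwise non-intersecting lines of the affine plane $\pi$, hence pairwise parallel; as a parallel class of $\operatorname{AG}(2,q)$ has exactly $q$ lines, they constitute a full parallel class $C$ of $\pi$.

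Now each $\ell_{1,j}$ meets all $q$ lines of $C$, again because $U$ is complete bipartite. If $\ell_{1,j}$ were not contained in $\pi$, then by (b) it would meet $\pi$, and hence the union of the lines of $C$, in at most one point, so it could meet at most one line of $C$; since $q \ge 2$ this is impossible. Therefore every $\ell_{1,j}$ lies in $\pi$, and being pairwise non-intersecting lines of $\pi \cong \operatorname{AG}(2,q)$ they are pairwise parallel, forming a full parallel class of $\pi$ — the first alternative.

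I expect the delicate points to be precisely the incidence facts (a) and (b) together with the counting that exactly $q$ lines exhaust a parallel class of $\operatorname{AG}(2,q)$; the genuine crux is the step using (b), which forces the first part into $\pi$, and this only works once the second part has been pinned down to a \emph{full} parallel class $C$ rather than a proper subset of one. Everything else is bookkeeping. As a by-product, the argument shows the two parts of $U$ are simultaneously of the same type.
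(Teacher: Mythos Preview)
Your proof is correct and follows essentially the same approach as the paper: assume two lines in one part are parallel, observe that the plane they span must contain every transversal (i.e., every line of the other part), and then conclude that all lines of the original part are forced into that plane as well. The paper compresses this into three sentences, while you spell out the incidence facts (a) and (b) and the counting that pins down a full parallel class; your version is a faithful elaboration of the same argument.
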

\begin{proof}
Suppose two of the lines are parallel. Then they lie in the same subplane together with all transversals. This means that all lines $\ell_{i,1},\ell_{i,2},\ldots, \ell_{i,q}$ lie in the same plane.   
\end{proof}

Let $A$ be a $3$-dimensional affine space. A nonempty set $R$ of pairwise skew lines of $A$ is called an \emph{affine regulus} if the following statements are true:\\
(1) Through each point of each line of $R$ there is a transversal of $R$.\\
(2) Through each point of a transversal of $R$ there is a line of $R$. 

It follows from the definition that the set $R^{opp}$ of all transversals of an affine regulus $R$ again form an affine regulus; we call it the \emph{opposite affine regulus} of $R$. 
The following proposition shows that affine reguli in $\operatorname{AG}(3,q)$ necessarily come from projective reguli in $\operatorname{PG}(3,q)$.

\begin{proposition}\label{CharacterisationOfAffineReguli}
The following statements hold.\\
{\rm(1)} Let $R$ and $R^{opp}$ be a regulus in $\operatorname{PG}(3,q)$ and its opposite. Let $\ell_1 \in R, \ell_2 \in R^{opp}$ be two lines and $\Pi$ be the plane through these lines. Let $R'$ and $(R^{opp})'$ be two sets of $q$ affine lines in the affine space $\operatorname{PG}(3,q) \setminus \Pi$ remaining from $R$ and $R^{opp}$. Then $R'$ and $(R^{opp})'$ are an affine regulus and its opposite. \\
{\rm(2)} Every affine regulus in $\operatorname{AG}(3,q)$ comes from a projective regulus in the way described in item (1).
\end{proposition}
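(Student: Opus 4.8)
The plan is to move freely between $\operatorname{AG}(3,q)$ and its projective closure and to control transversals by means of Theorem~\ref{threeskewlines} and Lemma~\ref{twoskewlines}. The single auxiliary fact used throughout is that in $\operatorname{PG}(3,q)$ the common transversals of any three pairwise skew lines are exactly the lines of the opposite regulus of the regulus through them (which is unique by Theorem~\ref{threeskewlines}): through each of the $q+1$ points $p$ of one of the three lines there is exactly one line meeting the other two (span the plane through $p$ and the second line, then intersect it with the third, as in Lemma~\ref{twoskewlines}), so there are precisely $q+1$ common transversals, and since every line of the opposite regulus is one of them, the two sets coincide.

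For part~(1), put $\Pi = \langle \ell_1, \ell_2 \rangle$ and work in $\operatorname{AG}(3,q) = \operatorname{PG}(3,q) \setminus \Pi$. First I record the geometry of the removal: a line $\ell \in R \setminus \{\ell_1\}$ is skew to $\ell_1$, hence not contained in $\Pi$, and meets $\Pi$ in the single point $\ell \cap \ell_2$, which lies on $\ell_2 \subseteq \Pi$ since $\ell_2$ is a transversal of $R$; symmetrically, a line $m \in R^{opp} \setminus \{\ell_2\}$ meets $\Pi$ only on $\ell_1$. Hence $R'$ and $(R^{opp})'$ consist of exactly $q$ affine lines each; any two lines of $R'$ are skew in $\operatorname{AG}(3,q)$, since their projective closures are skew and their points at infinity, both on $\ell_2$, are distinct; and each line of $(R^{opp})'$ meets every line of $R'$ at an affine point, hence is a transversal of $R'$. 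I then verify the two affine-regulus axioms for $R'$. If $p$ lies on a line of $R'$ then $p \notin \Pi$, so the transversal of $R$ through $p$ is some $m \in R^{opp} \setminus \{\ell_2\}$ whose affine part passes through $p$; this gives axiom~(1). For axiom~(2), the projective closure of any transversal $t$ of $R'$ meets the $q \ge 3$ skew lines of $R \setminus \{\ell_1\}$ at affine points, so by the auxiliary fact it lies in $R^{opp}$, and it is not $\ell_2$ (which meets those lines on $\Pi$); thus $t \in (R^{opp})'$, and through any point $p \in t$ the line of $R$ through $p$ is not $\ell_1$ and yields a line of $R'$. Hence $R'$ is an affine regulus; running the same argument with the roles of $R, R^{opp}, \ell_1, \ell_2$ interchanged shows $(R^{opp})'$ is one as well, and the description of the transversals of $R'$ just obtained shows $R'$ and $(R^{opp})'$ are opposite.

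For part~(2), let $R'$ be an affine regulus in $\operatorname{AG}(3,q) = \operatorname{PG}(3,q) \setminus \Pi_\infty$ with opposite $(R')^{opp}$; as above, the projective closures of the lines of $R'$, and of $(R')^{opp}$, are pairwise skew. Pick three lines of $R'$; by Theorem~\ref{threeskewlines} their closures lie in a unique projective regulus $R$, with opposite $R^{opp}$. Every line of $(R')^{opp}$ meets these three lines, so its closure lies in $R^{opp}$; then every remaining line of $R'$ meets the $q \ge 3$ lines of $(R')^{opp}$, so its closure lies in $(R^{opp})^{opp} = R$. Hence the closures of the lines of $R'$, and of $(R')^{opp}$, are $q$ of the $q+1$ lines of $R$ and of $R^{opp}$ respectively; write $\ell_1, m_1$ for the two missing lines. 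Finally, for a line $\ell' \in R'$ with point at infinity $P$, the transversal of $R$ through $P$ cannot be the closure of a line of $(R')^{opp}$ --- that would force $\ell'$ to be parallel to a line of $(R')^{opp}$ that in fact meets it --- so it is $m_1$; thus $m_1$ contains the $q \ge 2$ distinct points at infinity of the lines of $R'$, so $m_1 \subseteq \Pi_\infty$, and symmetrically $\ell_1 \subseteq \Pi_\infty$. Therefore $\Pi_\infty = \langle \ell_1, m_1 \rangle$, and $R'$, $(R')^{opp}$ are exactly the affine parts of $R \setminus \{\ell_1\}$ and $R^{opp} \setminus \{m_1\}$; that is, $R'$ arises from $R$ as in item~(1).

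I expect the main obstacle to be axiom~(2) in part~(1) and its analogue in part~(2): one must pin down \emph{all} transversals of the affine regulus, not merely exhibit the intended ones, and this is exactly where the ``three pairwise skew lines determine their common transversals'' fact is used, together with the hypothesis $q \ge 3$. The remaining case $q = 2$, in which an affine regulus is forced to be a pair of skew lines, I would treat directly: two skew affine lines extend to skew projective lines with distinct points at infinity $P_2, P_3$, and taking for $\ell_1$ any line of $\Pi_\infty$ through neither $P_2$ nor $P_3$ (such a line exists in $\operatorname{PG}(2,2)$), together with the transversal $m_1 = P_2 P_3$, exhibits the pair as the affine part of $R \setminus \{\ell_1\}$, where $R$ is the unique regulus through $\ell_1$ and the closures of the two given lines and the plane removed is $\langle \ell_1, m_1 \rangle = \Pi_\infty$.
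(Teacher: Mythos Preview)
Your proof is correct. For part~(1) you give a careful verification of both regulus axioms where the paper simply writes ``straightforward''; the only point to tidy is that for $q=2$ your argument for axiom~(2) invokes the auxiliary fact on three skew lines, which is unavailable, but in that case a transversal of two skew affine lines has exactly two points, one on each line, so axiom~(2) holds trivially.

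For part~(2) your route genuinely differs from the paper's. The paper argues from three skew affine lines and splits into two cases according to whether their points at infinity are collinear; in the non-collinear case it derives a contradiction by observing that all $q+1$ lines of the opposite projective regulus have affine parts and invoking a pigeonhole argument to force the affine regulus into a plane. You instead take the whole affine regulus, embed its closures into the unique projective regulus $R$ through three of them, pull the opposite affine regulus into $R^{opp}$ by the same device, and then show directly that the two missing lines $\ell_1\in R$, $m_1\in R^{opp}$ are forced to lie in $\Pi_\infty$ because each collects the $q\ge 2$ distinct points at infinity of the other family. Your argument is cleaner and avoids the somewhat delicate step in the paper's Case~2 where one must argue that restrictions of the projective transversals really are transversals of the hypothetical affine regulus. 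What the paper's decomposition buys in exchange is an explicit criterion --- three skew affine lines extend to an affine regulus iff their points at infinity are collinear --- which your approach does not isolate. One small addition would strengthen your write-up: state explicitly that the affine-regulus axioms force $|R'|=|(R')^{opp}|=q$ (a transversal meets each of the pairwise skew lines of $R'$ in a distinct point, so $|R'|\le q$, while axiom~(2) gives $|R'|\ge q$), so that picking three lines from each is legitimate.
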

\begin{proof}
(1) This is straightforward.

(2) If $q = 2$, the result follows trivially. Suppose that $q > 2$. Let $\ell_1,\ell_2,\ell_3$ be a triple of pairwise skew lines in $\operatorname{AG}(3,q)$.

\textbf{Case 1:} These three lines lie in the same class of parallel planes, occupying three planes.

\textbf{Case 2:}
Each two of the three lines define their own class of parallel planes.

In Case 1, the three points of intersection with the plane at infinity lie on the same line at infinity. In Case 2, the three intersection points do not lie on the same line in the plane at infinity.
Then consider the projectivisation of the affine space. Each of the three lines $\ell_1,\ell_2,\ell_3$ is completed by a point from the plane at infinity. In the projective space we still have a triple of skew lines $\ell_1',\ell_2',\ell_3'$. 
Consider the projective regulus $R$ uniquely determined by the triple $\ell_1',\ell_2',\ell_3'$ of skew lines. 
It is clear that the restriction of $R$ to the
affine part is an affine regulus if and only if one line of the opposite regulus $R^{opp}$ is in the plane at infinity, that is, if and only if the lines $\ell_1,\ell_2,\ell_3$ are from Case 1. Thus, in Case 1 the lines $\ell_1,\ell_2,\ell_3$ can be uniquely extended to an affine regulus. 

To complete the proof, let us show that if $\ell_1,\ell_2,\ell_3$ is a triple of lines from Case 2, then this triple cannot be extended to an affine regulus. Suppose to the contrary there exist lines $m_1,\ldots,m_{q-3}$ such that $\ell_1,\ell_2,\ell_3,m_1,\ldots,m_{q-3}$ is an affine regulus. Consider the projective regulus $\ell_1',\ell_2',\ell_3',k_1',\ldots,k_{q-2}'$ uniquely determined by the triple $\ell_1',\ell_2',\ell_3'$. Note that each transversal to this projective regulus does not lie in the plane at infinity (otherwise, the triple $\ell_1,\ell_2,\ell_3$ would be from Case 1), that is, each such a transversal intersects the plane at infinity at one point. Note that the restriction of each such a transversal to the affine part is a transversal to the affine regulus. Thus, the affine regulus has at least $q+1$ transversals. By the pigeonhole principle, there exists a point of the affine regulus incident to two transversals, which means the entire affine regulus lies in the affine plane determined by these two transversals, a contradiction.

\end{proof}

Now the statement of Theorem \ref{CharacterisationAffine} in the 3-dimensional case follows from Lemma \ref{GridsInAG3q} and Proposition \ref{CharacterisationOfAffineReguli}. The $n$-dimensional case of Theorem \ref{CharacterisationAffine} follows from the next lemma.

\begin{lemma}\label{ndimregulusAffine} 
The following statements hold.\\
{\rm (1)} Any two skew lines in $\operatorname{AG}(n,q)$ uniquely determine a subspace $\operatorname{AG}(3,q)$, which also contains all transversals to these lines.\\
{\rm (2)} Let $U$ be an induced complete bipartite subgraph in $X_q(n,1)$, where $n \ge 3$. Then all the lines corresponding to the vertices of $U$ lie in a subspace $\operatorname{AG}(3,q)$, and the parts of $U$ form an affine regulus and an opposite affine regulus in this subspace.\\
\end{lemma}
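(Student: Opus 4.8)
The plan is to reduce the $n$-dimensional case to the already-established $3$-dimensional picture (Lemma \ref{GridsInAG3q} and Proposition \ref{CharacterisationOfAffineReguli}) by showing that an induced complete bipartite subgraph of $X_q(n,1)$ can never ``escape'' a $3$-dimensional affine subspace.

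For item (1), I would argue exactly as in the proof of Lemma \ref{ndimregulus}(1): two skew lines $\ell_1,\ell_2$ in $\operatorname{AG}(n,q)$, viewed as cosets of $1$-dimensional subspaces whose direction vectors together with a connecting vector span a $3$-dimensional linear subspace, generate a unique affine subspace $\operatorname{AG}(3,q)$. Any transversal $m$ meets both $\ell_1$ and $\ell_2$, so it contains a point of $\ell_1$ and a point of $\ell_2$, hence is the (affine) line joining two points of this $\operatorname{AG}(3,q)$ and therefore lies inside it. This is essentially immediate.

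For item (2), let $U$ have parts $\{\ell_{1,1},\dots,\ell_{1,q}\}$ and $\{\ell_{2,1},\dots,\ell_{2,q}\}$. By Lemma \ref{GridsInAG3q} applied within any $\operatorname{AG}(3,q)$ spanned by two lines of the same part (or by a direct argument), each part consists either of pairwise parallel lines or of pairwise skew lines. First I would handle the parallel case: if, say, $\ell_{1,1}\parallel\ell_{1,2}$, then these two lines span a plane $\pi$, and since every $\ell_{2,j}$ meets both $\ell_{1,1}$ and $\ell_{1,2}$, each $\ell_{2,j}$ lies in $\pi$; then each $\ell_{1,i}$ meets two of the $\ell_{2,j}$'s (which are distinct lines of $\pi$, hence meet in at most a point — here one uses $q\ge 2$ so there are at least two lines in the second part and they are concurrent or not), forcing $\ell_{1,i}\subseteq\pi$, so the whole configuration lies in a single plane, which embeds in an $\operatorname{AG}(3,q)$. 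In the remaining case both parts consist of pairwise skew lines. Pick $\ell_{1,1},\ell_{1,2}$; by (1) they determine a subspace $W\cong\operatorname{AG}(3,q)$ containing all their common transversals, and each $\ell_{2,j}$ is such a transversal, so $\ell_{2,j}\subseteq W$ for all $j$. Symmetrically, using $\ell_{2,1},\ell_{2,2}$ and (1), every $\ell_{1,i}$ lies in the $\operatorname{AG}(3,q)$ they span; but since $\ell_{2,1},\ell_{2,2}\subseteq W$ and two skew lines determine their $\operatorname{AG}(3,q)$ uniquely, that subspace is again $W$, so all $\ell_{1,i}\subseteq W$ as well. Thus the entire subgraph $U$ lives in one $W\cong\operatorname{AG}(3,q)$, and now the $3$-dimensional result (Proposition \ref{CharacterisationOfAffineReguli} together with Lemma \ref{GridsInAG3q}) identifies the two parts as an affine regulus and its opposite (or a pair of parallel classes in a subplane, which is the parallel case handled above).

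The main obstacle I anticipate is the bookkeeping in the skew case: one must be careful that the two $\operatorname{AG}(3,q)$'s obtained from the two parts actually coincide, which is where the uniqueness clause of item (1) is essential — a pair of skew lines lies in a \emph{unique} $\operatorname{AG}(3,q)$, and $\ell_{2,1},\ell_{2,2}$ are simultaneously skew lines of $W$ and of the subspace they self-determine, forcing equality. A secondary subtlety is the degenerate possibility $q=2$, where each part has only two lines; there the statement still holds since two skew lines plus their transversals already sit inside a $\operatorname{AG}(3,q)$ by (1), and with $q=2$ an ``affine regulus'' of two skew lines is exactly what remains of a projective regulus after deleting a plane, so no separate argument is needed.
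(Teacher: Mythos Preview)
Your proposal is correct and follows essentially the same route as the paper: for (1) you span an $\operatorname{AG}(3,q)$ from the two skew lines exactly as the paper does, and for (2) you reduce to the $3$-dimensional case via (1) and then invoke Lemma~\ref{GridsInAG3q} and Proposition~\ref{CharacterisationOfAffineReguli}. The paper's own proof of (2) is the one-line ``It follows from item (1) and Proposition~\ref{CharacterisationOfAffineReguli}''; your version simply spells out the implicit bookkeeping (why the two candidate $\operatorname{AG}(3,q)$'s coincide, and how the parallel case sits in a plane), and even flags the parallel-class alternative that the lemma's statement glosses over.
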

\begin{proof}
(1) Let $\ell_1$ and $\ell_2$ be two skew lines in $\operatorname{AG}(n,q)$. Without loss of generality, we may assume that $\ell_1$ contains zero. Let $\ell_2'$ be the uniquely determined line that is parallel to $\ell_2$ and passes through zero. Let $\ell_3$ be a line connecting zero with some point of $\ell_2$. Let $v_1,v_2,v_3$ be vectors generating the lines $\ell_1,\ell_2',\ell_3$ through zero, respectively, and let $V$ be the subspace generated by $v_1,v_2,v_3$. Then $V$ forms a 3-dimensional subspace in $\operatorname{AG}(n,q)$ and $V$ contains the lines $\ell_1,\ell_2$ and all transversals to them.

(2) It follows from item (1) and Proposition \ref{CharacterisationOfAffineReguli}. 
\end{proof}

The proof of Proposition \ref{ConstructionOfAffineRegulus} is direct: one can show that any two lines from the same set are skew and any two lines from different sets intersect. Let $V, V_1$ and $V_2$ be the subspace generated by $v_1,v_2,v_3$, the subspace generated by $v_1,v_3$ and the subspace generated by $v_2,v_3$, respectively. It is easy to see that all lines from $S_1 \cup S_2$ belong to $V$. One can also show that the class of parallel planes in $V$ given by the plane $V_1$ (resp. $V_2$) contains the lines from $S_1$ (resp. $S_2$).

The proof of Proposition \ref{WDBPlus2} follows from the definition of an eigenfunction.

\section{Proof of Proposition \ref{balanceTheorem} and Corollary \ref{Balance}}\label{sec:balance}
Let us give a proof of Proposition \ref{balanceTheorem}.

Consider the scalar product of the orthogonal functions $f_1$ and $f_2$.
Since $f_1$ is $(1,-1,0)$-valued, this gives
$$
\sum\limits_{u \in Supp^+(f_1)}f_2(u) - \sum\limits_{u \in Supp^-(f_1)}f_2(u) = 0,
$$
and, consequently,
$$
\sum\limits_{u \in Supp^+(f_1)}f_2(u) = \sum\limits_{u \in Supp^-(f_1)}f_2(u).
$$

Put $m^+:=\overline{Supp^+(f_1)}$ and $m^-:=\overline{Supp^-(f_1)}$. By definitions and the fact that $f_2$ is $(x_1,x_2)$-valued,
we get
$$
m^+\cdot x_1 + (|Supp^+(f_1)| - m^+)\cdot x_2 = m^-\cdot x_1 + (|Supp^-(f_1)| - m^-)\cdot x_2.
$$
and then, taking into account that $|Supp^+(f_1)| = |Supp^-(f_1)|$,
$$
(m^+-m^-)(x_1 - x_2) = 0.
$$
Since $x_1 \ne x_2$, we finally conclude that $m^+ = m^-$, which completes the proof of Proposition \ref{balanceTheorem}.

Corollary \ref{Balance} is then straightforward.

\section*{Acknowledgements} \label{Ack}
D.~Panasenko is supported by Russian Science Foundation according to the research project 22-21-20018. S. Goryainov is supported by the Special Project on Science and Technology Research and Development Platforms, Hebei Province (22567610H).
The authors are grateful to Denis Krotov for the discussions concerning the results in this paper. Finally, the authors thank two anonymous referees whose valuable comments improved the content of the paper. 


\end{document}